\theoremstyle{plain}
\newtheorem{theorem}{Theorem}
\newtheorem{lemma}{Lemma}
\newtheorem{proposition}{Proposition}
\newtheorem{remark}{Remark}
\theoremstyle{definition}
\newtheorem{definition}{Definition}
\newtheorem{example}{Example}
\theoremstyle{plain}
\newtoks\thehProclaim
\newtheorem*{Proclaim}{\the\thehProclaim}
\theoremstyle{definition}
\newtoks{\thehRemark}
\newtheorem*{Remark}{\the\thehRemark}
\renewcommand{\leq}{\leqslant}
\renewcommand{\geq}{\geqslant}
\begin{document}

	\title{Quasisections of circle bundles and Euler class}

	\author{ Gaiane Panina, Timur Shamazov,  Maksim Turevskii }
	
	\address{ G. Panina: St. Petersburg Department of Steklov Mathematical Institute; St. Petersburg State University;  gaiane-panina@rambler.ru; M.Turevskii:St. Petersburg State University; turmax20052005@gmail.com; Timur Shamazov:St. Petersburg State University;  }

	\subjclass[2000]{}
	
	\keywords{}

	\begin{abstract} Let $ E \xrightarrow[\text{}]{\pi} B$ be an oriented circle bundle
		over an oriented closed surface $B$. A \textit{quasisection}  is a smooth surface ${Q}$ (either closed or bordered)
		mapped by a generic smooth mapping  $q$  to $E$ such that $\pi\circ q({Q})=B$.
		
		In the paper we derive a \textit{local formula for the Euler number}, that is, we show that Euler number (Euler class) of the  bundle equals the sum of weights of (some of) singularities  of a quasisection.
		We also prove the uniqueness of  such a formula.
		
		The local formula is a close relative of  M. Kazarian's formula which relates the Euler number and Morse bifurcations
		of a generic function defined on the total space $E$. 
		
	\end{abstract}

	\maketitle

	\section{Introduction}\label{SecIntro}

	Let $ E \xrightarrow[\text{}]{\pi} B$   be a \textit{circle bundle},  that is, an oriented locally trivial  fiber bundle whose fibers are circles $S^1$. 
	Assume that the base $B$ is a closed $2$-dimensional oriented surface. Such bundles are classified by their Euler classes, or just \textit{Euler numbers}.
	
\medskip

We start with \textit{closed quasisections}:
	\begin{definition}
		A (closed) quasisection of $ E \xrightarrow[\text{}]{\pi} B$ is a closed smooth manifold  $Q$ and a smooth map
		$q:Q\rightarrow E$
		such that $\pi\circ q (Q)=B$. 

		The pair $(Q,q)$ will be denoted by $\mathcal{Q}$. We also abbreviate  the composition $\pi\circ q:Q\rightarrow B$ as $\pi:{\mathcal{Q}}\rightarrow B$.
		
	\end{definition}
	Throughout the paper, the maps $q$ and  $\pi\circ q $ are assumed to be \textit{generic}, or \textit{stable},  in the sense of Whitney's singularity theory \cite{AVGZ}. In particular, this means that the singularities of $q$ can be
	lines of self-intersection, isolated triple points, and Whitney umbrellas only. We also assume that away from  Whitney umbrellas, the singularities
	of  $\pi\circ q $  are pleats and folds. There are some more natural genericity conditions; we discuss them in due time.
	
	\medskip
\textbf{Main results}\begin{itemize}
                     
                       \item Theorem \ref{ThmMain} ({The  local formula for the Euler number  via singularities of a closed quasisection}) asserts that the Euler number of a bundle equals the weighted number of some special singularities of the  quasisection, called   \textit{singular vertices}. These are: ($I$) intersections of (projections of) two folds of $\mathcal{Q}$, 
	($II$) pleats of $\mathcal{Q}$, and
	($III$) intersections of a fold and a regular sheet of $\mathcal{Q}$. 
  \item Section \ref{SecExamples}  gives  a zoo of closed quasisections.
                       \item Theorem \ref{ThmUniqueness} (\textit{the uniqueness theorem}) asserts that  no other assignement of  weights  gives a valid  formula  in Theorem \ref{ThmMain}.

\item \textit{Bordered quasisections} and their singularities are studied  in Sec. \ref{SecBordered}. We  prove   a local formula forthe Euler number via singularities of a bordered quasisection, Theorem \ref{ThmMain2}.
                     \end{itemize}

	\medskip

\textbf{Motivations}

	Let us now mention relatives of our  results and some motivations.
	
	In \cite{Kaz}, the following construction is used.
	Given a generic smooth function $f:E\rightarrow \mathbb{R}$,  the restriction of $f$ to a fiber has at least two critical points.
	The set of critical points taken over all the fibers is a close analogue of a quasisection, but generic singularities in that framework look different. So what we prove is a very close relative of
	Theorem 1.4, (4) from \cite{Kaz}.

	We would also like to mention the
	\textit{local combinatorial formulae} for the Euler class, \cite{Igusa, Mnev3},
	that retrieve the Euler class of the bundle  from local data of a triangulated circle bundle.
	It was proven in \cite{Sonya} that this formula is not unique, so for triangulated bundles the analogue  of Theorem \ref{ThmUniqueness}
does not hold.

	Stable maps between closed surfaces have been studied since long. A stable map has folds and pleats only; the fold lines may have self-intersections in the image. There exist some relationships between the Euler characteristics of the surfaces,
	number of pleats, number  (and types) of crosses of the fold in the image, etc. \cite{Eliashberg, Quine, Yamamoto}. 
	For instance, it is known \cite{Thom, Whitney} that a generic map from $\mathbb{R}P^2$ to $\mathbb{R}^2$ necessarily has an odd number of pleats.
	
	Theorem \ref{ThmMain} can be also considered as a relationship between  weighted number of singularities and the Euler class of the fiber bundle. In our formula we have ''old'' singularities, (crossings of folds and pleats) counted with  some weights, and also ''new'' singularities (crossings of a fold and a regular sheet) which are not visible via the map $\pi\circ q$.  

\bigskip
	
	\textbf{A reminder on Euler class}
	
	We remind the reader that a circle bundle is uniquely defined by its Euler class  $\mathcal{E}(E\rightarrow B)\in H^2(B,\mathbb{Z})$. 
	Fixing an orientation of $B$ and identifying $H^2(B,\mathbb{Z})$ with $\mathbb{Z}$, we speak of the \textit{Euler number} $\mathcal{E}$ of a bundle $E\rightarrow B$.

	We use the following way to compute the Euler number:
	 Assume
	that a partial section $s: B\setminus\{x_i\}\rightarrow E$ is defined everywhere except for a finite set of points $x_1,...,x_m\in B$.
	For each of $x_i$  take a neighborhood $U_i$ bounded by a small circle $C_{x_i}$. The circle inherits an orientation from $B$.  
	Next, choose a trivialization of the bundle in the neighborhood $U_i$.
	The  restriction of the section $s$
	defines  a map   $$s_{\mid_{C_{x_i}}}:C_{x_i}\rightarrow S^1.$$  The source and the target are two oriented circles, 
	so the degree of the map is well-defined. Set $$ind_s(x_i):= deg~ s_{\mid_{C_{x_i}}}.$$ 
	The index $ind_s(x_i)$ depends on the section $s$, but the sum of the indices over all $x_i$ does not:
	
	\begin{proposition} \cite{Milnor}, \cite{FomFu}.
		In the above notation,$$\mathcal{E}(E\rightarrow B)=\sum_i ind_s(x_i).$$
	\end{proposition}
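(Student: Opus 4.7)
The plan is to identify the quantity $\sum_i \mathrm{ind}_s(x_i)$ with the evaluation of the obstruction-theoretic Euler class on the fundamental class $[B]$. This splits naturally into two independent tasks: showing that $\sum_i \mathrm{ind}_s(x_i)$ does not depend on the choice of partial section $s$, and showing that for one convenient choice it equals $\mathcal{E}(E \to B)$.

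First, I would fix a CW structure on $B$ in which every $x_i$ lies in the interior of its own $2$-cell, with all other $2$-cells disjoint from the singular set. Since the bundle is trivial over each closed $2$-cell, I choose such trivializations; away from the $x_i$'s these trivializations restrict to the $1$-skeleton, and the partial section $s$ gives a section over the $1$-skeleton. The cellular obstruction cochain $\sigma_s \in C^2(B;\mathbb{Z})$ defined by $\sigma_s(e) := \deg\bigl(s|_{\partial e} : \partial e \to S^1\bigr)$ then vanishes on every $2$-cell not containing some $x_i$, and equals $\mathrm{ind}_s(x_i)$ on the $2$-cell containing $x_i$. A standard check shows $\sigma_s$ is a cocycle and its class is independent of the trivialization of the bundle over the $2$-cells (it is the primary obstruction to extending the section from the $1$-skeleton to the $2$-skeleton). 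Evaluation on the fundamental class yields
\[
\langle [\sigma_s],\, [B]\rangle \;=\; \sum_i \mathrm{ind}_s(x_i).
\]

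Second, I would verify that the obstruction class $[\sigma_s]$ coincides with the Euler class $\mathcal{E}(E \to B)$. One route is to invoke the standard identification of the Euler class with the primary obstruction to a section of an oriented circle bundle; alternatively, I would compute both sides on the universal example, namely the bundles over $S^2$ with arbitrary Euler number, realized as the unit circle bundles of complex line bundles $\mathcal{O}(n)$, using the obvious partial section with a single zero of index $n$. Combined with the first step, this yields the formula for every partial section of every bundle.

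The main technical obstacle I expect is bookkeeping of orientations: the orientation of $B$ induces orientations on the small circles $C_{x_i}$ via the outward-normal-first convention, the bundle orientation together with the orientation of $B$ orients each fiber $S^1$, and the trivialization must be consistent with these choices. A sign error here would flip the formula into $-\sum_i \mathrm{ind}_s(x_i) = \mathcal{E}$. Establishing the correct sign requires a careful comparison with the model case above, which will anchor all the orientation conventions in one place.
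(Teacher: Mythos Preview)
The paper does not give its own proof of this proposition: it is stated as a known fact with citations to \cite{Milnor} and \cite{FomFu}, and the authors simply use it as input for the rest of the argument. So there is no ``paper's proof'' to compare against.

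Your outline is the standard obstruction-theoretic derivation and is correct in substance. A couple of minor remarks. First, since $B$ is a closed surface, every $2$-cochain is automatically a cocycle (there are no $3$-cells), so the ``standard check that $\sigma_s$ is a cocycle'' is vacuous here; what you do need is that the cohomology class of $\sigma_s$ is independent of the choice of trivializations over the $2$-cells and of the section over the $1$-skeleton, which is the usual primary-obstruction argument. Second, your plan already contains the most efficient way to pin down the sign: checking the model bundles over $S^2$ suffices, because any oriented circle bundle over a closed oriented surface can be compared with one of these via a degree-one map collapsing the complement of a disc. With that in place, the identification $[\sigma_s]=\mathcal{E}(E\to B)$ and hence $\langle \mathcal{E},[B]\rangle=\sum_i \mathrm{ind}_s(x_i)$ follows.
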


	Now we can explain \textbf{the leading idea  of our construction in short} (it traces back to M. Kazarian's multisections \cite{Kaz}).

	Assume we have a random section $s$  defined outside some fixed finite set  $\{x_i\}\subset B$. 
	Then one can take the expectation:

	$$\mathcal{E}(E\rightarrow B)=\mathbb{E}\Big(\sum_i ind_s(x_i)\Big)= \sum_i \mathbb{E}(ind_s(x_i)).$$
	
	We will show that a quasisection  yields a random section with a finite probability space. In other words,  a quasisection gives a finite collection of partial sections, and taking $\mathbb{E}$ amounts to averaging.

	\section{Singular points and their portraits}
	
	\subsection{Singularities}
	Let $\mathcal{Q}=(Q,q)$ be a quasisection of a circle bundle. The  \textit{singular curve} of $\mathcal{Q}$  is the image under projection $\pi$   of the line of self-intersection and  folds of  $\mathcal{Q}$.

	We assume in the paper that singularities of the singular curve  are stable, that is, persist under small perturbations of $q$.
	For instance, (the projections of) two pleats never coincide, there are no triple crossing points of (projections of) folds, etc.

	\medskip
	\textbf{Remark.}  Stability means that intersection of projections of two folds is always transversal. But the projection of a line of self-intersection may be 
	stably tangent to the projection of a fold.  This happens whenever a regular sheet of $\mathcal{Q}$ crosses a fold  in $E$.
	
	\medskip

	A \textit{singular vertex} is one of the following:
	\begin{enumerate}
		\item A self-intersection  of the singular curve.
		\item The image under projection $\pi$ of  a Whitney umbrella.
		\item The image under projection $\pi$ of a pleat.
	\end{enumerate}
	
	Singular vertices cut  the singular curve into \textit{singular segments} (or \textit{singular circles}, if a component of the singular curve is closed and has no singular vertices).

	A typical landscape of singular curves and singular vertices of a generic quasisection is depicted in Fig. \ref{fig:landscape}.
	
	\begin{figure}[h]
		\includegraphics[width=0.6\linewidth]{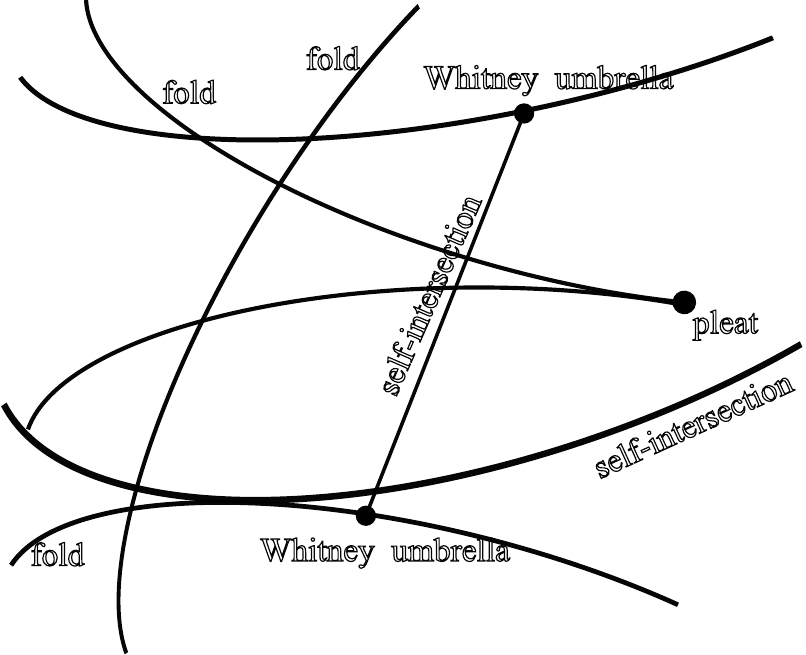}
		\caption {Singular curve and singular vertices, a fragment of a typical landscape.}
		\label{fig:landscape}
	\end{figure}

	\subsection{Portraits of points from the base}\label{SecPortraits}
	
	Let $x\in B$ be an arbitrary point.  We may assume that its neighborhood and the preimage of the neighborhood are equipped with flat Riemanian metrics
	such that the projection $\pi$ locally equals  the orthogonal projection, and the fibers have one and the same length. Let $C_x$ be a small circle embracing $x$. 
	Then $\pi^{-1}(C_x)$ is a flat torus.
	The circle $C_x$ is oriented according to the orientation of $B$; the orientation is depicted  counterclockwise. We depict 
	$\pi^{-1}(C_x)$  as an  annulus assuming
	that its inner and the outer circles are glued. Besides, we assume that the orientations of the fibers  point inwards,   from the outer circle to the inner circle,
	see Fig. \ref{fig:Ann}, left. In the annulus, we mark $\pi^{-1}(C_x)\cap\mathcal{ Q}$,
	 which is a non-emply union of some  curves.
	
	This picture  is called \textit{the portrait of the point} $x\in B$.  We say that  \textit{ two portraits are equal}
if they differ on orientation preserving fiberwise diffeomorphism.

	\bigskip
	
	\textbf{ Examples} \begin{enumerate}
                      \item If $x$ is a regular point, its portrait  (Fig. \ref{fig:Ann}, left) is a disjoint union of a non-zero number of \textit{simple circles}. 
	Each simple circle comes from a regular sheet of $\mathcal{Q}$.
                      \item Assume $x$ belongs to the interior of a singular edge which comes from a fold. Then its portrait looks as is depicted in Fig. \ref{fig:Ann}, right.
	It consists of  a (non-zero) number of simple circles, and one \textit{fold curve}.
	
                    \end{enumerate}

	\begin{figure}[h]
		\includegraphics[width=0.7\linewidth]{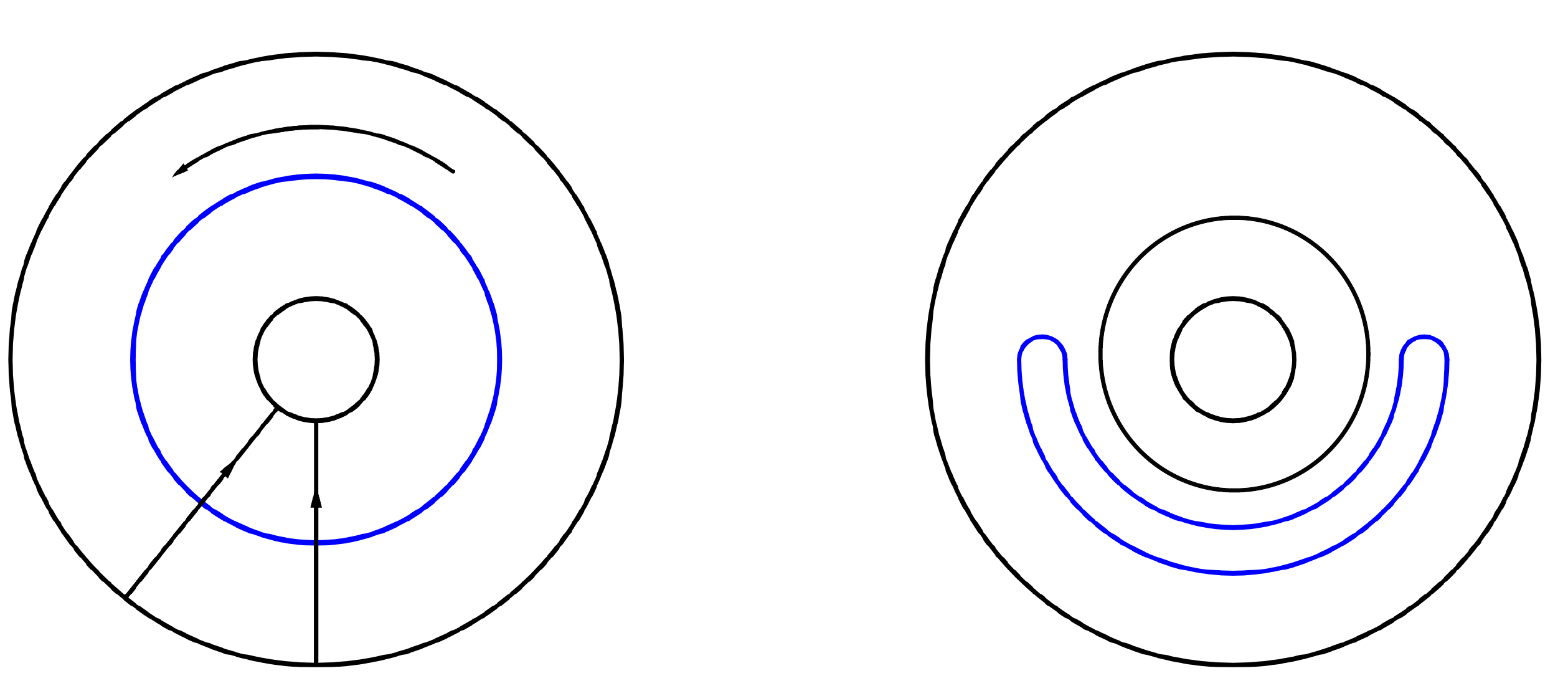}
		\caption { Left: The portrait of a regular point. Right:  A  portrait of a point $x$ lying on a fold. }
		\label{fig:Ann}
	\end{figure}
	\medskip

\newpage
	It is easy to see:
	\begin{lemma}\label{LemmaPortr}
		\begin{enumerate}
			\item If $x$ ranges within  the interior of one and the same singular edge (so $x$ is not a singular vertex), its portrait does not change.
			\item  Portrait of an intersection point of (projections of) two folds is  as  in Fig. \ref{fig:nonsymm}, left, up to adding/removing  some of simple circles.

 Such a singular vertex   is called  \textbf{type $I$}.
			
			\item  Portrait of a (projection of a)  pleat is (up to line  symmetry) as  in Fig. \ref{fig:nonsymm}, center. The number of simple
			circles may vary.

Such a singular vertex  $x$  is called \textbf{ type $II$, right}   (respectively,  \textbf{left}),
			if its portrait equals to (respectively, is line symmetric to) Fig.  \ref{fig:nonsymm},  center.

			\item  Portrait of the projection of an intersection of a fold and  a regular sheet is (up to line  symmetry) as in Fig.  \ref{fig:nonsymm}, right.
			
			Such a singular vertex  $x$ is called  \textbf{ type $III$, right} (respectively, \textbf{left}) if its  portrait equals to (respectively, is line symmetric to)   Fig. \ref{fig:nonsymm}, right, also up to  some number  of simple circles.  
			
		\end{enumerate}
		
	\end{lemma}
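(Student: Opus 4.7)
The plan is to establish each item by reducing to a local normal form near the point $x\in B$ and computing the slice $\pi^{-1}(C_x)\cap \mathcal{Q}$ directly. Genericity gives Whitney normal forms for the map $q$ at each point of $Q$ (immersion, self-intersection, triple point, or Whitney umbrella) and, away from umbrellas, normal forms for the map $\pi\circ q$ (regular, fold, or pleat). Combining the two, one obtains a finite list of local models for the pair $(q,\pi\circ q)$ in a neighborhood of any preimage $q^{-1}(\pi^{-1}(x))$. The portrait is a topological invariant of this local model (up to fiberwise orientation-preserving diffeomorphism), so we only have to inspect one representative of each model.

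For item (1), I would parametrize the singular edge by $t$ and let $x(t)$ vary. On a tubular neighborhood of the preimage of this edge, every local model is a smooth $1$-parameter family that does not cross any bifurcation locus, since by assumption $x(t)$ avoids singular vertices. A tubular-neighborhood construction then gives a fiberwise diffeomorphism of $\pi^{-1}(C_{x(t)})$ for varying $t$ taking $\mathcal{Q}\cap\pi^{-1}(C_{x(t)})$ to itself. This is essentially an Ehresmann/isotopy extension argument over the smooth edge.

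For items (2)--(4) I would write the standard normal forms. A fold contributes a sheet of the form $\{u^2 = v\}$ (with $v$ the coordinate on the base transverse to the fold image), and slicing $\pi^{-1}(C_x)$ over a small circle enclosing the crossing of two transverse fold images yields exactly the two interlinked ``fold curves'' of Fig.~\ref{fig:nonsymm} left, plus auxiliary simple circles coming from the regular sheets of $\mathcal{Q}$ that remain regular nearby. For a pleat the Whitney normal form $\pi\circ q:(u,s)\mapsto(u^3-su,s)$ gives, after slicing over $C_x$, a single connected curve with two fold points on one side of the pleat locus — this is Fig.~\ref{fig:nonsymm} center, and the two chiralities of orientation account for the ``left/right'' distinction. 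For item (4) the fold lies in $E$, and an extra regular sheet of $\mathcal{Q}$ crosses that fold transversally in $E$; its projection is therefore stably tangent to the fold image (as noted in the Remark above), and slicing produces the ``simple circle crossing a fold curve'' configuration of Fig.~\ref{fig:nonsymm} right, again with two chiralities.

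The main obstacle is bookkeeping rather than conceptual: one must carefully check that (i) no other local model can produce the same singular vertex type, so the classification is complete, and (ii) the orientations of $C_x$ and of the fibers match the conventions fixed in Section \ref{SecPortraits}, so that the left/right distinction in items (3) and (4) is intrinsically defined. The rest is a direct computation with the normal forms, together with the observation that any regular sheet of $\mathcal{Q}$ passing over $x$ but unaffected by the vertex contributes only a simple circle, accounting for the ``up to adding/removing simple circles'' clause.
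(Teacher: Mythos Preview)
The paper does not actually prove Lemma~\ref{LemmaPortr}: it is introduced with the phrase ``It is easy to see'' and left without argument. Your proposal therefore cannot be compared to a paper proof in the usual sense, but it is a correct and natural way to supply the omitted details. Reducing to Whitney normal forms for $q$ and $\pi\circ q$, slicing over $C_x$, and invoking an Ehresmann-type isotopy along the interior of a singular edge is exactly the kind of routine verification the authors evidently had in mind; the ``bookkeeping'' caveats you list (completeness of the case list, matching the orientation conventions) are the right points to check, and none of them presents a genuine obstacle.
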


	Singular vertices of types $I$, $II$, and $III$ are called \textit{essential  vertices}.

	\begin{figure}[h]
		\includegraphics[width=1\linewidth]{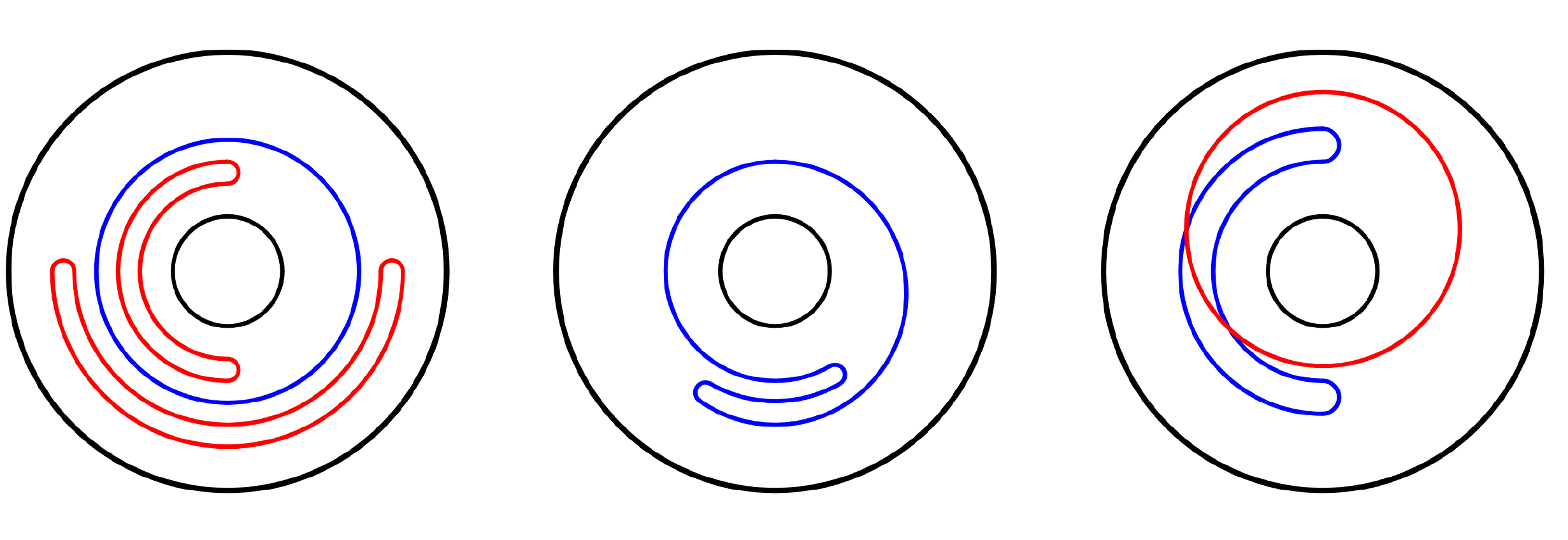}
		\caption {The list of all non-symm portraits, types $I$, $II$  right, and $III$  right. }
		\label{fig:nonsymm}
	\end{figure}
	
	Now consider  the projection of a Whitney umbrella.  Let us remind that a Whitney umbrella is defined 
	(in suitable coordinates, and up to diffeomorphisms of the target and the source  spaces) as $x(u,v)=uv, \ y(u,v)=u,\ z(u,v)=v^2$. The point $W=(0,0)$  is  called the \textit{endpoint} of the umbrella, 
	and the line of self-intersection  emanating from $W$ is called the\textit{ handle}.

	We are interested in the portrait of  (projection of) the endpoint $W$ under assumption that the  projection goes  in a generic direction.

	\begin{lemma}  The portrait of a Whitney umbrella is a   disjoint union of a number of simple circles  and an umbrella curve,
		 as  in Fig. \ref{fig:Whit}. Therefore the portrait  has a line symmetry. The other scenarios are non-generic. 
	\end{lemma}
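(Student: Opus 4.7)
The plan is to use the normal form of a Whitney umbrella, a direct computation of the preimage of $C_x$, and an intrinsic $\mathbb{Z}/2$-involution to produce the line symmetry.

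First, after a source diffeomorphism I may assume $q(u,v) = (uv,\,u,\,v^{2})$ in suitable local coordinates $(u,v)$ on $Q$ and $(x,y,z)$ on $E$ near the endpoint $W$. Using the flat-metric trivialization of the bundle, $\pi$ is locally a linear projection, and the assumption of a \emph{generic projection direction} unpacks as two open conditions on the fiber: it is not parallel to the handle $\{u=0\}$ (so the handle projects to a smooth curve rather than a single point), and it does not lie in the limiting tangent plane $\{z=0\}$ at $W$ (so $\pi\circ q$ acquires only the standard fold/cusp singularity). After a linear change of $(x,y,z)$ respecting these conditions I may write $\pi(x,y,z) = (x,\,y+\varepsilon z)$ for some small nonzero $\varepsilon$, so that $\pi\circ q(u,v) = (uv,\,u+\varepsilon v^{2})$.

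Next, I would compute the intersection $\pi^{-1}(C_x)\cap q(Q)$ explicitly. Parametrizing $C_x$ by $(r\cos\phi, r\sin\phi)$, preimages under $\pi\circ q$ satisfy the cubic $\varepsilon v^{3}-r\sin\phi\,v+r\cos\phi=0$, with $u$ determined by $u=r\sin\phi - \varepsilon v^{2}$. Its discriminant $\varepsilon r^{2}(4r\sin^{3}\phi - 27\varepsilon\cos^{2}\phi)$ vanishes at two values $\phi_{1}<\pi/2<\phi_{2}$ close to $\pi/2$, so the cubic has three real roots in $(\phi_{1},\phi_{2})$ and one real root elsewhere; it has a double root at each $\phi_i$ and factors as $v(\varepsilon v^{2}-r)=0$ at $\phi=\pi/2$, where the two nonzero roots $\pm\sqrt{r/\varepsilon}$ yield the same height $z=r/\varepsilon$, encoding the handle self-intersection. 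Tracing $z=v^{2}$ along $\phi$, the \emph{umbrella curve} in $\pi^{-1}(C_x)$ is a single connected curve that wraps once around the annulus, is tangent to the fibers at $\phi_{1}$ and $\phi_{2}$, and has exactly one transverse self-intersection at $(\phi,z)=(\pi/2, r/\varepsilon)$. Any additional regular sheet of $\mathcal{Q}$ crossing $\pi^{-1}(C_x)$ but disjoint from the umbrella's sheet contributes a simple circle to the portrait.

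Finally, the line symmetry comes from the intrinsic involution $(u,v)\mapsto(u,-v)$ of the normal form, which lifts via $q$ to the reflection $(x,y,z)\mapsto(-x,y,z)$ of $E$. This involution intertwines $\pi(x,y,z)=(x, y+\varepsilon z)$ with the base reflection $(X,Y)\mapsto(-X,Y)$, whose restriction to $C_x$ is $\phi\mapsto\pi-\phi$, while preserving the fiber coordinate $z$. Consequently the portrait is invariant under this fiber-preserving reflection, establishing the claimed line symmetry. The non-generic cases (fiber parallel to the handle, or fiber in the limiting tangent plane) produce qualitatively different pictures, but both conditions are of positive codimension in the space of fiber directions and are thus excluded generically. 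The main obstacle I expect is matching the umbrella curve to Fig.~\ref{fig:Whit} in detail (the relative placement of the two fold tangencies, the handle crossing, and the long wrapping arc); this is a lengthy but routine verification using the implicit function theorem away from the discriminant and explicit local models near the fold and handle.
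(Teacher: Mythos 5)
Your setup contains a genuine gap: the reduction to $\pi(x,y,z)=(x,\,y+\varepsilon z)$ is not a normalization of the generic situation but a codimension-one special position. The kernel of that projection is spanned by $(0,-\varepsilon,1)$, which lies in the plane $\{x=0\}$ spanned by $\operatorname{im}(dq_W)$ (the $y$-axis) and the handle direction (the $z$-axis). That plane is the tangent cone of the cross-cap image $x^2=y^2z$, so it is preserved by every coordinate change of the target fixing the normal form; hence no ``linear change of $(x,y,z)$'' can move a generic fiber direction $(a,b,c)$ with $a\neq 0$ into it, and your two stated conditions (not parallel to the handle, not in $\{z=0\}$) do not capture the relevant dichotomy. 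The invariant way to see it: the fiber line through $W$ meets the cross-cap with intersection multiplicity $2$ when the fiber direction avoids $\{x=0\}$ (the generic case, since cross-caps of a generic $q$ are isolated and one further codimension-one coincidence is avoided), and multiplicity $3$ exactly when it lies in $\{x=0\}$ --- which is the case your normal form forces. Concretely, for a direction $(a,b,1)$ with $a\neq 0$ the relevant equation is $bv^3-av^2+Yv-X=0$, which for small $(X,Y)$ has exactly two small roots; $\pi\circ q$ then has a \emph{fold} (not a cusp) at the umbrella point, nearby fibers meet the local umbrella in at most two points, and the umbrella curve in the portrait is a contractible closed curve with a single self-crossing (at the projected handle), lying over roughly half of $C_x$ --- not a curve wrapping once around the annulus with a three-sheeted zone, which is what your cubic $\varepsilon v^3-r\sin\phi\,v+r\cos\phi$ describes. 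Your elegant exact involution $(u,v)\mapsto(u,-v)$, $(x,y,z)\mapsto(-x,y,z)$ also exists only in this special position (it does not commute with a projection having $a\neq0$), so the line symmetry in the generic case has to be argued combinatorially, not by an ambient reflection.

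So, as written, the computation analyzes a non-generic configuration and therefore does not establish the statement ``the other scenarios are non-generic''; at best it treats one degenerate scenario explicitly. This is also where your route diverges from the paper's: the paper keeps the umbrella abstract, quotes the Fukui--Hasegawa description of planar sections through $W$ (an $A_2$ curve, at most three points on nearby fibers, three on one side forcing none on the antipodal side) and finishes by case analysis, whereas you attempt a full normal-form computation --- a legitimate and potentially sharper strategy, but it requires carrying the extra parameter $a\neq 0$ (equivalently, working with $\pi\circ q(u,v)=(uv-av^2,\,u-bv^2)$) rather than normalizing it away. The downstream use of the lemma is not endangered, since the genuine generic portrait (the once-crossed contractible loop) is also equivalent to its mirror image and hence contributes zero, but that verification is exactly the part your argument skips.
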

	\begin{proof}  Let $x=\pi(W)$. Since we are interested in the local picture only, 
		assume that we deal with a Whitney umbrella in $\mathbb{R}^3$ and with the standard
		orthogonal  projection $\pi$  to the horizontal plane.
		Take a generic point $p\in C_x$ and a plane $h\subset \mathbb{R}^3$ which contains the endpoint $W$ of the umbrella,  and the points $p$ and $x$. Therefore $h$ contains the direction of the projection (the\textit{ vertical direction}).
		Denote by $p'\in C_x$ the point opposite to $p$.

		The intersection of a Whitney umbrella with a generic plane passing through $W$
		is  a planar curve with an $A_2$ singularity at $W$, see \cite{FukHag}. It has the following properties : 
		(1) Each vertical line in $h$ which is sufficiently close to $W$, intersects the curve in at most three points.
		(2) If $\pi^{-1}(p)$ intersects the curve at $3$ points, then $\pi^{-1}(p')$ has no intersections with the curve.
		Now a simple case analysis completes the proof.
	\end{proof}

		\begin{figure}[h]
			\includegraphics[width=0.3\linewidth]{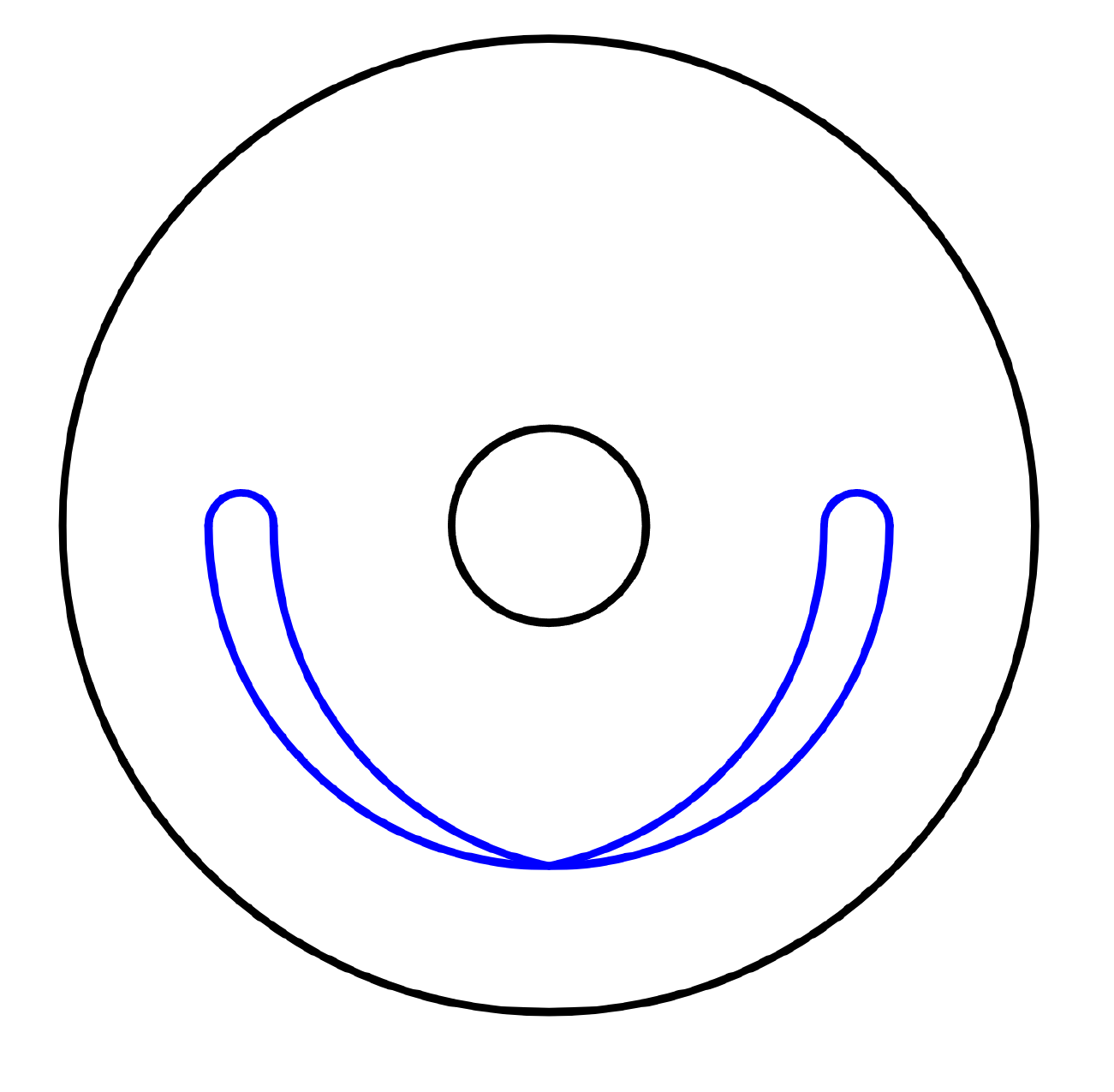}
			\caption {The portrait of a Whitney umbrella }
			\label{fig:Whit}
		\end{figure}

It is easy to check:
	\begin{lemma}  The portraits of all the singular vertices that are not listed in Lemma \ref{LemmaPortr} have a line symmetry.
	\end{lemma}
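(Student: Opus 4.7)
My plan is a case-by-case verification. The preceding lemma handles projections of Whitney umbrellas, and pleats constitute type II and are listed in Lemma \ref{LemmaPortr}, so the remaining singular vertices are self-intersections of the singular curve that are not of type I or type III. By definition, types I and III are precisely the transversal crossings of the singular curve at which the two contributing singularities of $q$ in $E$ meet at a common fiber point above $x$: two folds meeting in $E$ (type I), or a regular sheet transversally crossing a fold in $E$, which forces their projections to be tangent (type III). Therefore, the remaining cases are those self-intersections in $B$ for which the contributing singularities of $q$ in $E$ lie above \emph{pairwise distinct} points of the fiber over $x$, plus the degenerate case of a projected triple point of $q$.

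I would then enumerate these cases: a fold and a self-intersection line over distinct fiber points (transversal crossing in $B$); two self-intersection lines over distinct fiber points (transversal crossing in $B$); and triple crossings coming from an isolated triple point of $q$ in $E$. In each case the portrait in the annulus $\pi^{-1}(C_x)$ decomposes as a disjoint superposition of elementary sub-portraits concentrated near distinct fiber heights. The elementary sub-portraits are: the \emph{fold lens}, consisting of two arcs joining two opposite points of $C_x$ (contributed by a fold at a non-vertex point, as in Fig.~\ref{fig:Ann}, right); the \emph{bigon}, consisting of two simple circles meeting transversally at two opposite points of $C_x$ (contributed by a line of self-intersection at a non-vertex point); and isolated simple circles from regular sheets.

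Each elementary sub-portrait is visibly line-symmetric: the fold lens under the reflection of $C_x$ through the axis bisecting its opening, the bigon under the reflection through the line joining its two crossing points, and any simple circle trivially. To combine these local symmetries into a single line symmetry of the whole portrait, I would use the flexibility of orientation-preserving fiberwise diffeomorphisms: because the sub-portraits occupy disjoint fiber regions, one can realize an orientation-reversing diffeomorphism of the annulus that acts on each sub-portrait as its own line reflection and interpolates smoothly between them. This yields a global line symmetry of the whole portrait, settling the ``disjoint-fiber'' cases.

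The delicate case, which I expect to be the main obstacle, is the projected triple point, where three simple circles with three pairs of opposite crossings on $C_x$ appear. A single reflection of $C_x$ does not generically preserve the three directions of the pairwise self-intersection curves in $B$. I would handle this by combining a reflection of $C_x$ with a permutation of the three sheets, realized as a fiberwise diffeomorphism: at a triple point the three sheets are interchangeable, so the configuration admits a nontrivial symmetry after composing reflection with a suitable transposition of two of the sheets. Verifying this carefully, together with checking that no further ``non-listed'' configurations arise under the genericity assumptions already in force, completes the case analysis.
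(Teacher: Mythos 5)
Your final enumeration of the unlisted vertices (fold crossing a double curve at a different fiber height, two double curves crossing, projected triple points, plus the Whitney umbrella handled by the preceding lemma) is the correct one, but the criterion you derive it from is wrong: a type $I$ vertex is a crossing of the \emph{projections} of two folds which, generically, do \emph{not} meet in $E$ (two curves in a $3$--manifold are generically disjoint); in the type $I$ portrait the two fold curves sit at distinct fiber heights, separated by $n$ and $k$ simple circles. So ``the contributing singularities lie above pairwise distinct fiber points'' does not distinguish the listed from the unlisted vertices, and taken literally it would place fold--fold crossings among your ``remaining'' cases, to which you would then apply your symmetry argument.

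This is not a cosmetic slip, because your key gluing step is false as a general principle: a fiberwise diffeomorphism of the annulus covers a \emph{single} diffeomorphism of $C_x$, so you cannot let different fiber bands be reflected across different axes and ``interpolate smoothly between them''. The type $I$ portrait is itself a counterexample to the principle you invoke: it is a union, disjoint in the fiber direction, of two fold lenses and some simple circles, each piece individually line-symmetric, yet the whole portrait is not line-symmetric whenever $n\neq k$ --- this asymmetry is exactly what produces the nonzero weight $\mathcal{W}_{ff}(n,k)$ on which the paper rests. So your argument, as written, proves too much. The repair is to exhibit, for each unlisted vertex, \emph{one} reflection of $C_x$ preserving the base data of all pieces simultaneously (the two endpoints of the single fold arc, the pair of crossing points of each bigon) --- possible in your ``disjoint-fiber'' cases precisely because at most one fold lens occurs there, unlike type $I$ where the two interleaved fold arcs admit no common orientation-reversing base map preserving both --- and then to correct by a fiberwise diffeomorphism over the identity of $C_x$, band by band. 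Finally, the triple-point case is only announced: ``permuting the sheets by a fiberwise diffeomorphism'' is not available verbatim since the three circles actually cross; a concrete check is needed, e.g.\ the local model of three planes through one point of the fiber, for which the cyclic sequence of adjacent-height transpositions around $C_x$ alternates bottom, top, bottom, top, bottom, top and is therefore invariant under reversal up to a rotation of $C_x$. The paper itself dismisses the lemma as easy to check, so the standard here is a correct case check, which your proposal does not yet provide.
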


	%An essential vertex of type $I$ is characterized by  an ordered pair of two  natural numbers,
	% $k$ and $n$ of simple circles that appear in the portrait.
	%Let us explain this.

	Let $x$ be an essential vertex of type $I$.

	Imagine a point $y$ that goes along  the circle $C_x$ in the ccw direction, starting from a place with no fold  curves,
	that is, with the minimal number of points in $\pi^{-1}(y)\cap\mathcal{ Q}$.
	Let us order the folds as follows: the point $y$ meets the first fold   first. The other fold  is the second.
	
{ Now let
		$n(x)$  be the number of simple circles of  $\pi^{-1}(y)\cap \mathcal{Q}$  lying between the first and the second fold curves, 
		if one counts from the \textbf{first} fold curve in the direction of the fiber.}
	
	Set also $k(x)$ be the number of simple circles  lying between the fold curves, 
	if one counts from the \textbf{second} fold curve in the direction of the fiber.  
	
\medskip
	
	It is clear that:
	\begin{lemma}\begin{enumerate}
	               \item (Two folds intersect in the projection)\begin{enumerate}
			\item The portrait of an essential vertex of type $I$ 
			defines (and  is defined by)
			two
			numbers, $n$ and $k$.
			\item For the  line symmetry image, $n$ and $k$ interchange.
			\item  The number $n+k$ cannot be zero.
		\end{enumerate}
	               \item (A pleat)
		The  portrait of an essential vertex of type $II$    defines (and  is defined by)  its type (right or left)
		and the
		number $r$ of simple circles ($r$ might be zero).
	               \item 	(A fold intersects a regular sheet) The  portrait of an essential vertex of type $III$ defines
		(and  is defined by)  its type (right or left) and the  number $r$ of simple circles ($r$ might be zero).     
	             \end{enumerate}
\end{lemma}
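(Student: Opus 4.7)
The plan is to leverage the classification from Lemma~\ref{LemmaPortr}: for each of the three essential types, the non-simple-circle part of the portrait is rigid up to fiberwise diffeomorphism (modulo left/right symmetry for types $II$ and $III$). What remains is to show that the simple circles contribute exactly the stated integer invariants, to verify the behavior under line symmetry, and, in type $I$, to establish $n+k\neq 0$.

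For type $I$, I would proceed as follows. By Lemma~\ref{LemmaPortr}(2), the non-circle part consists of the two fold curves of Fig.~\ref{fig:nonsymm} left. Each fold curve is a closed loop in the torus $\pi^{-1}(C_x)$ that occupies roughly half of the angular direction and stays close to the height at which its fold lifts over $x$; generically these two heights are distinct, so the two fold loops are disjoint and occupy disjoint height intervals in every fiber. A simple circle is a horizontal loop that must avoid both fold loops, hence it is confined to one of the two arcs of the fiber complementary to the two forbidden height intervals. This gives exactly two non-negative integer invariants $n,k$, and a fiberwise diffeomorphism can slide simple circles arbitrarily within each arc, so $(n,k)$ is a complete invariant. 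The line symmetry, being orientation-reversing, swaps the two fold loops and/or the fiber direction along which $n,k$ are measured, either way interchanging $n$ and $k$.

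To prove $n+k\geq 1$, I would pick $y\in C_x$ in the angular sector where neither fold is present (such a sector exists since the two fold projections are transverse arcs through $x$). Over $y$, the intersection $\pi^{-1}(y)\cap\mathcal{Q}$ receives contributions only from the simple circles, and since $\pi\circ q(Q)=B$ by the definition of a quasisection, this intersection is non-empty, forcing at least one simple circle.

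Types $II$ and $III$ are then analogous: Lemma~\ref{LemmaPortr}(3) and (4) fix the non-circle pattern up to the left/right line symmetry, so after the type is chosen there is a unique region of the annulus that can accommodate horizontal loops, and the number $r$ of simple circles in it, together with the left/right attribute, is a complete invariant. The main obstacle I foresee is the rigorous verification, carried out directly from Figs.~\ref{fig:nonsymm}, that no hidden continuous modulus remains---that horizontal sliding of simple circles and wiggling of fold loops within their region can be absorbed by fiberwise diffeomorphism---and that no auxiliary region admits horizontal loops producing an extra invariant.
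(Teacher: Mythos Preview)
The paper does not actually prove this lemma: it is introduced with ``It is clear that'' and no argument is given. Your proposal supplies the natural justification the paper omits, and the reasoning is sound. In particular, your argument for $n+k\neq 0$ --- pick $y\in C_x$ in the angular sector where neither fold contributes, so that $\pi^{-1}(y)\cap\mathcal{Q}$ consists only of simple-circle points, and invoke surjectivity of $\pi\circ q$ --- is exactly the right idea and is not spelled out anywhere in the paper.

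One small point worth tightening: for the line-symmetry claim in type $I$, rather than saying the reflection ``swaps the two fold loops and/or the fiber direction'', it is cleaner to trace the definition directly. The ordering of the two folds is determined by which one the ccw-travelling point $y$ meets first after leaving the minimal sector, and $n$ versus $k$ is determined by the fiber orientation. A line reflection of the annulus reverses the ccw direction on $C_x$ while preserving the radial (fiber) direction, or vice versa; in either case exactly one of these two orientation conventions flips, which interchanges the roles of ``first'' and ``second'' fold and hence swaps $n$ and $k$. This removes the ambiguity in your ``and/or''.
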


\medskip

Let us assign\textit{ weights }to the essential singular vertices:

	\begin{definition}\begin{enumerate}
			\item  For an essential vertex  $x$ of type $I$, set $$\mathcal{W}_{ff}(x)=\mathcal{W}_{ff}(n,k)=\frac{4(n-k)}{(n+k)(n+k+2)(n+k+4)}.$$
			
			\item For a right essential vertex $x$ of type $II$  set $$\mathcal{W}_{p}(x)= \mathcal{W}_{p,R}(r)=\frac{2}{(r+1)(r+3)}$$
			
			For a left essential vertex $x$ of type $II$ set $$\mathcal{W}_{p}(x)=\mathcal{W}_{p,L}(r)=\frac{-2}{(r+1)(r+3)}.$$
			
			\item For a right essential vertex $x$ of type $III$, set $$\mathcal{W}_{fs}(x)=\mathcal{W}_{fs}(r,R)= \frac{2}{(r+1)(r+3)}$$
			
			For a left essential vertex $x$ of type $III$, set $$\mathcal{W}_{fs}(x)=\mathcal{W}_{fs}(r,L)=\frac{-2}{(r+1)(r+3)}.$$
			
		\end{enumerate}
		
	\end{definition}

	\newpage
	
	\section{Local formula for the Euler class}

	\begin{theorem}\label{ThmMain}  
		The Euler number of a circle bundle with a generic quasisection $\mathcal{Q}$ equals the sum of weights of essential singular vertices:
		
		$$\mathcal{E}= \sum_I \mathcal{W}_{ff}(x_i)+ \sum_{II} \mathcal{W}_{p}(x_i)+\sum_{III} \mathcal{W}_{fs}(x_i).$$

		That is, only pairwise intersections of folds, pleats, and intersections of a fold and a regular sheet matter.

	\end{theorem}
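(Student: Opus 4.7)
The plan is to make rigorous the expectation heuristic sketched in the introduction. From the quasisection $\mathcal{Q}$ I will construct a finite probability space of partial sections $s:B\setminus\Sigma\to E$, with $\Sigma$ contained in the set of essential singular vertices, and apply the Proposition of Section \ref{SecIntro} in expectation:
\[
\mathcal{E}=\mathbb{E}\Bigl(\sum_{x\in\Sigma}\mathrm{ind}_s(x)\Bigr)=\sum_{x\in\Sigma}\mathbb{E}\bigl(\mathrm{ind}_s(x)\bigr).
\]
It then suffices to check that the expected index vanishes at every non-essential singular vertex, and equals $\mathcal{W}_{ff}$, $\mathcal{W}_p$ or $\mathcal{W}_{fs}$ at an essential vertex of type $I$, $II$, $III$ respectively.

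The sections are built from the portraits of Section \ref{SecPortraits}. Over a regular point of a singular edge, Lemma \ref{LemmaPortr} provides a coherent labeling of the finitely many points of $\pi^{-1}(x)\cap\mathcal{Q}$ by the simple circles and fold curves appearing in the portrait; these labels propagate continuously across connected components of the complement of the singular curve, organizing $\mathcal{Q}$ into local sheets, and each such sheet is a partial section. I take the random section $s$ to pick one of the points of $\pi^{-1}(x)\cap\mathcal{Q}$ uniformly at random. By Lemma \ref{LemmaPortr} and the subsequent lemmas, branches can be created or destroyed only at essential singular vertices or at vertices with line symmetric portraits.

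At every vertex with a line symmetric portrait (Whitney umbrellas and the remaining non-essential cases), the involution of the portrait reverses the fiber orientation and hence the winding sense of $s$ around $C_x$; pairing each branch with its mirror image shows that $\mathbb{E}(\mathrm{ind}_s(x))=0$. For each essential type the expectation is a direct combinatorial computation. At a type $I$ vertex the two fold arcs cut $C_x$ into four sectors whose fibers carry specific numbers of intersection points dictated by $n$ and $k$; a case analysis of how branches continue across each fold arc (guided by Fig.~\ref{fig:nonsymm}, left) together with the induced uniform probabilities produces the expected winding number $\frac{4(n-k)}{(n+k)(n+k+2)(n+k+4)}$. At a type $II$ or $III$ vertex, one side of $C_x$ carries $r+1$ branches and the other $r+3$, only $r+1$ of which persist across the vertex; the partial fraction identity $\frac{1}{r+1}-\frac{1}{r+3}=\frac{2}{(r+1)(r+3)}$ together with the right/left convention then yields $\pm\frac{2}{(r+1)(r+3)}$.

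Summing these expectations over essential vertices and invoking the Proposition will close the argument. The main technical difficulty is the type $I$ computation: one must verify that the enumeration of branches circulating around $C_x$ is indeed controlled by $n$ and $k$ as defined via the ``first'' and ``second'' fold curves, and that the winding contributions of the two groups of interior simple circles combine to produce precisely the numerator $4(n-k)$ of $\mathcal{W}_{ff}$. Types $II$ and $III$ should be entirely analogous but considerably simpler, and the genericity conditions of Section 2 ensure that no other singular vertex types can contribute.
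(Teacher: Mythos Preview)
Your overall strategy---build a finite probability space of partial sections out of the sheets of $\mathcal{Q}$ and average the index formula---is exactly the paper's approach. However, the proposal as written has two genuine gaps that the paper's proof does address and that you cannot skip.

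First, your random section is only defined on the complement of the singular \emph{curve}, not on $B\setminus\{\text{finitely many points}\}$. You choose a sheet uniformly over each domain $D_i$, but across a singular edge the two chosen sheets need not match, so $s$ is discontinuous along the whole edge and the Proposition of Section~\ref{SecIntro} does not apply. The paper fixes this by a second random step: on each singular edge where the sheets disagree it extends $s$ across the edge in one of the two possible fiberwise directions, each with probability $1/2$. Only after this extension is $s$ defined on $B\setminus\{x_i\}$, and only then does the index at $x$ become the purely combinatorial quantity $\tfrac{K-M}{2}$ (number of backward minus forward jumps around $C_x$) that you implicitly use in your type $I$--$III$ computations. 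Without this step your ``winding number'' of $s|_{C_x}$ is not even defined.

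Second, your sentence ``these labels propagate continuously across connected components of the complement of the singular curve, organizing $\mathcal{Q}$ into local sheets'' presupposes that the covering $\pi^{-1}(D_i)\cap\mathcal{Q}\to D_i$ is trivial. This is true only when $D_i$ is simply connected, which need not hold in general. The paper handles this by adding to $\mathcal{Q}$ a finite collection of thin sphere components whose fold circles cut every $D_i$ into simply connected pieces; the new type~$I$ vertices they create come in mirror pairs and cancel. You need some device of this kind; otherwise there may be no sheets to choose from over a given $D_i$.

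The remaining items (symmetry argument for non-essential vertices, and the explicit averages for types $I$, $II$, $III$) are in the paper as well; your sketches there are in the right direction but would need the jump lemma $\mathrm{ind}_s(x)=\tfrac{K-M}{2}$ made explicit before the ``case analysis'' and the partial-fraction identity can be connected to actual index contributions.
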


	\begin{proof}

		\textbf{{Multisections.}}
		The singular curve cuts the base $B$ into open domains $D_1,...,D_N$. For a domain  $D_i$, the restriction
		$$\pi^{-1}(D_i)\cup \mathcal{Q}\rightarrow D_i$$ is a (non-ramified) covering. If $D_i$  is one-connected, the covering is trivial.
		So let us first assume that all the  $D_i$  are one-connected. Denote the covering degree by $d_i$.
		
		We are going to construct a  random partial sections, or, equivalently, a finite number of partial sections that have equal probabilities. Each of the partial  sections will be defined everywhere except for the set 
		of singular vertices  $\{x_i\}$. 
		
		\begin{description}
			\item[Step 1] For each of $D_i$ we choose the section to be equal to one of the sheets of the covering. The choice is done independently for different domains, and uniformly for the sheets over  a given $D_i$. This gives us $\prod d_i$ partial sections defined so far over $\bigcup D_i$. Each of them comes  with probability  $\frac{1}{\prod d_i}$. 
			\item[Step 2] Extend each of the chosen sections to $B\setminus \{x_i\}$. It remains to extend a section defined over two neighbor domains $D_i$ and $D_j$  (it might be one and the same domain) to a separating segment of the singular curve.
			
			For each of the singular edges we choose the extensions according to the following rules. Assume the singular edge separates domains $D_i$ and $D_j$.
			\begin{enumerate}
				\item If the sheets over domains $D_i$ and $D_j$ agree (that is, one is the extension of the other), 
				then we extend the section to the singular edge by this sheet. 
				\item If the sheets over domains $D_i$ and $D_j$ do not  agree, we extend the section to the singular edge in two ways, clockwise and counterclockwise, see Fig. \ref{FigExtensionEdges}, with probability $1/2$ for each choice.

				Let us comment on Fig. \ref{FigExtensionEdges}. It depicts two neighbor domains, $D_i$ and $D_j$. Assume that a line $l$ crosses transversally
				the singular edge. One sees the restriction of the bundle to $l$, so the restricted total space is the cylinder. The dotted lines denote the quasisection. The red lines on the left indicate a choice of the partial sections over $D_i$ and $D_j$; in this particular case they do not agree. The  red lines on the right show two ways of extending the partial section to the singular edge. So the number of partial sections increases after Step 2 even further.
			\end{enumerate}

		\end{description}

		\begin{figure}[h]
			 \includegraphics[width=0.6\linewidth]{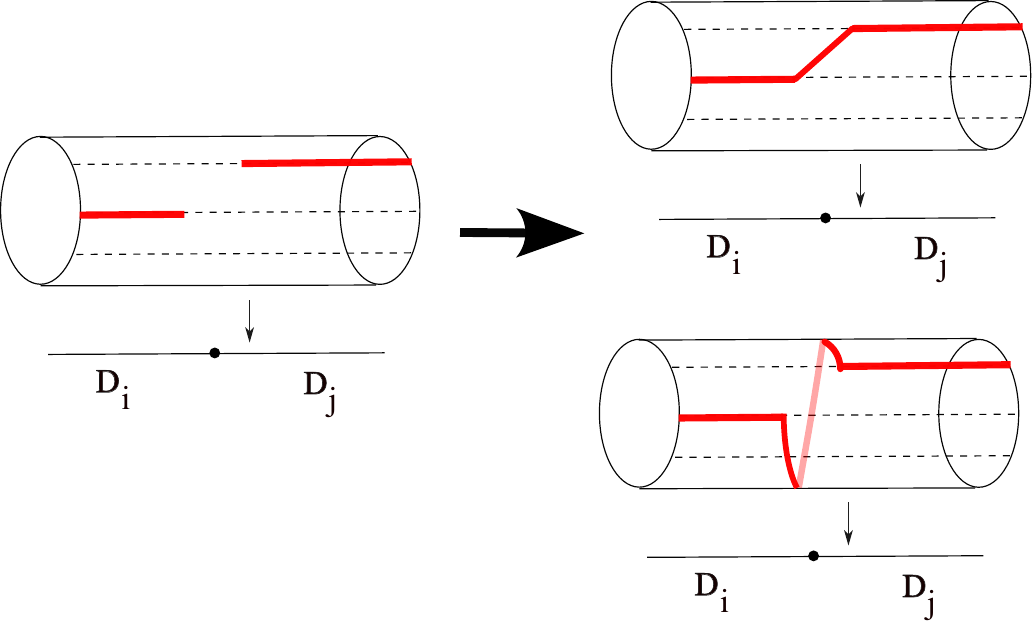}
			\caption {Extension of a partial section to a  singular edge.}
			\label{FigExtensionEdges}
		\end{figure}

		Since 
		$\mathcal{E}=\mathbb{E}\Big(\sum_i ind_s(x_i)\Big)= \sum_i \mathbb{E} (ind_s(x_i)),$
		it remains to compute the contribution $\mathbb{E} (ind_s(x))$ of each of the singular vertices.

		Let us first give technical backgrounds of the computation. In a neighborhood of a singular vertex, a choice of a partial section over the domains $D_i$ that are incident to the vertex,  amounts to 
		a  choice of a (possibly non-continuous) curve from the portrait  which intersects each of the fibers exactly once, see Fig.  \ref{FigJumps}.
		
		Assume one goes in the ccw direction. At the points of discontinuity the curve makes $M$ jumps ''forwards'', that is, in the direction of the fiber,
		and $K$ jumps backwards.

		\begin{figure}[h]
			\includegraphics[width=0.4\linewidth]{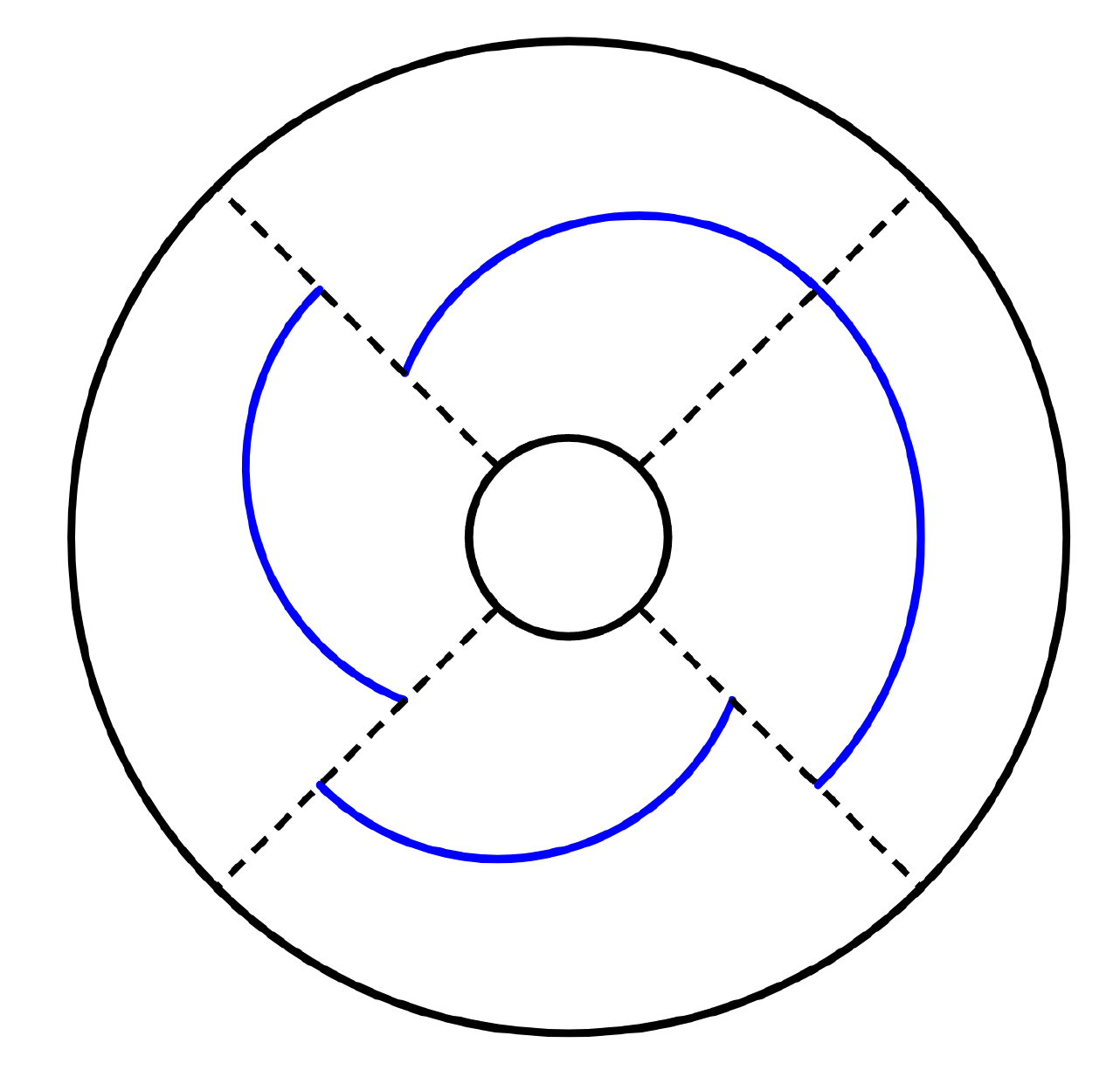}
			\caption {A choice of partial sections over the domains $D_i$ in a neighborhood of a singular vertex. Here $M=0,  \ K=3$ }
			\label{FigJumps}
		\end{figure}

		A straightforward computation shows:
		\begin{lemma}\label{LemmaJumps}
			In the above notation, a  choice of partial sections over $D_i$ contributes $\frac{K-M}{2}$.

Therefore $\mathbb{E} (ind_s(x))$ equals the average of $\frac{K-M}{2}$  over all possible choices of partial sections over the domains $D_i$ incident to $x$.
		
		\end{lemma}

		\begin{lemma} \label{LemmaSymm} \begin{enumerate}
				\item The contributions of a portrait and its image under a line symmetry sum up to zero.
				\item If a portrait of a singular vertex has a line symmetry, the vertex contributes $0$.
\item Only essential singular vertices have non-zero expectation $\mathbb{E} (ind_s(x))$.
			\end{enumerate}
			
		\end{lemma}
		\begin{proof}

			(1)  Indeed, {line symmetry maps a partial section $s$ of an initial portrait to some  partial sections of the symmetric image of the portrait. Besides, $K$ and $M$ exchange their roles. }
			(2) {follows from (1), since the numbers $\frac{K-M}{2}$ for two symmetric partial sections sum up to zero.}
			
		\end{proof}

\begin{lemma}
			For a right essential  vertex of type $II$ (a right pleat), $$ \mathbb{E} (ind_s(x))=\frac{2}{(r+1)(r+3)}.$$
			
			For a left essential  vertex of type $II$ (a left pleat), $$ \mathbb{E} (ind_s(x))=\frac{-2}{(r+1)(r+3)}.$$
			
		\end{lemma}  
		\begin{proof}
			
			{ Let's prove the statement for a right pleat. The singular vertex has two incident domains $D_i$ and $D_j$, so } {the} portrait is divided into two regions: a large {sector} and a small sector, see Fig. \ref{FigPlDivided}. Now, using {Lemma \ref{LemmaJumps}}, let us calculate the contribution of each of the partial sections. If we choose a red arc in the large sector and a red arc in the small sector, we get either two jumps down, as in case (a), or one jump down, as in cases (b) and (c). Their total contribution is $\frac{2}{2} + 2\cdot\frac{1}{2} = 2$. If we choose {a } blue arc in the small sector, we will have one downward and one upward jump,  {so}  the total contribution will be zero. Also, {a choice of any blue arc in the large sector and any other arc in the small sector  contributes  zero}. {Since the total number of sections is $(r + 1)(r + 3)$, the averaging gives  $\frac{2}{(r + 1)(r + 3)}$.}

	\begin{figure}[h]
		\includegraphics[width=0.9\linewidth]{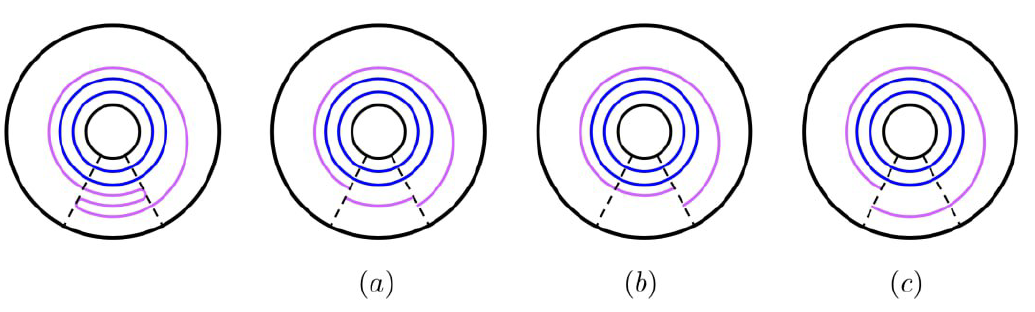}
		\caption {portrait of a pleat; choices of partial sections.}
		\label{FigPlDivided}
	\end{figure}

		\end{proof}

		\begin{lemma}
			For an essential vertex of type $I$,  $$\mathbb{E} (ind_s(x))=\frac{4(
				{n}-{k})}{(n+k)(n+k+2)(n+k+4)}$$

		\end{lemma}  
		\begin{proof}\footnote{A computation is contained in \cite{Kaz}. For the sake of completeness, we also give a proof here.}

\begin{figure}[h]
			\includegraphics[width=0.3\linewidth]{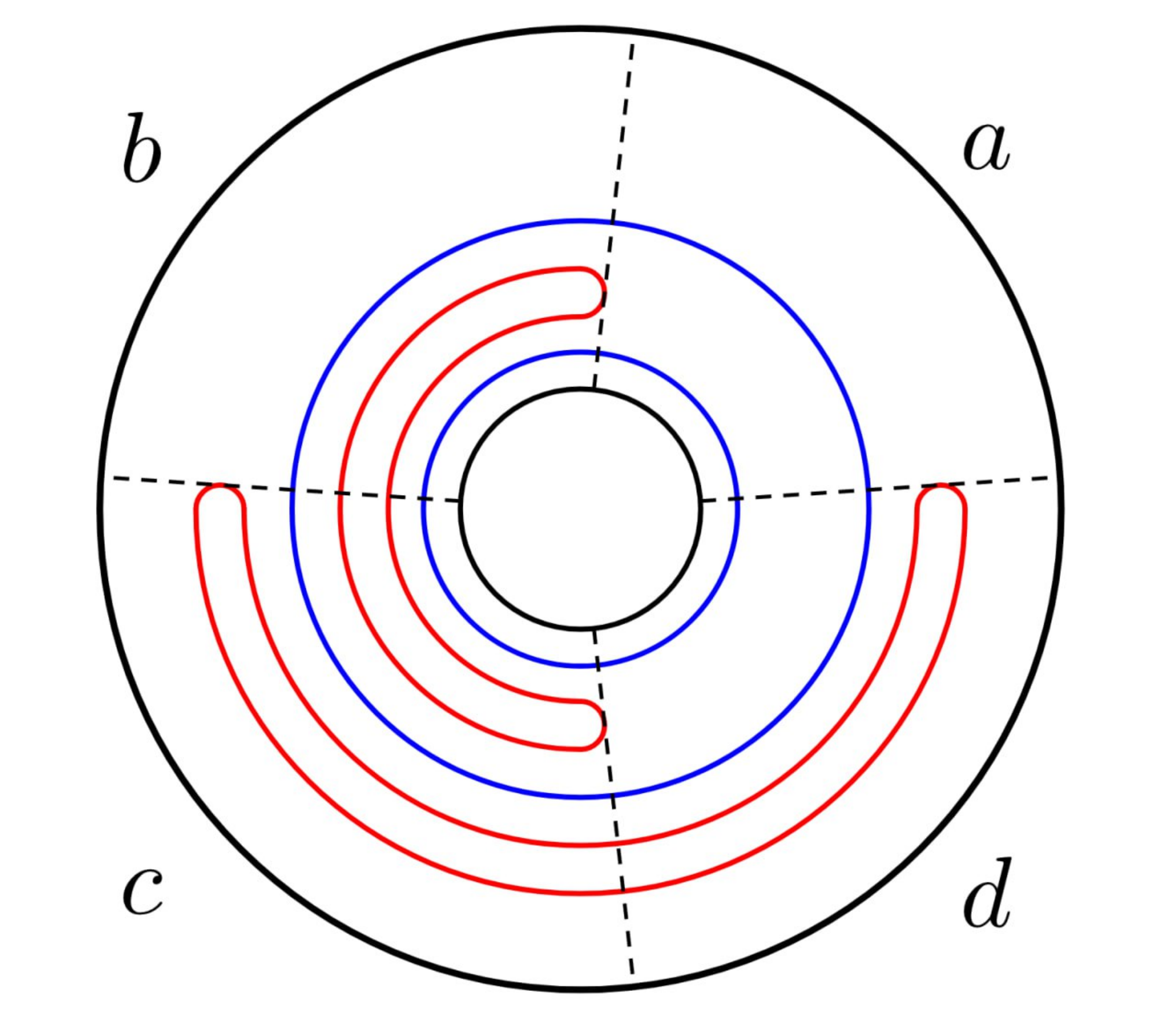}
			\caption {The four domains $D_i$ in a neighborhood of a  type $I$ critical vertex.}
			\label{FigMaxProof}
		\end{figure}
 The annulus is divided into four sectors, $a,b,c$ and $d$, see \ref{FigMaxProof}. the sectors come from  four domains $D_i$ in a neighborhood of the vertex. By Lemma \ref{LemmaJumps}, we have to average  $\frac{K-M}{2}$ over all choices of sheets  in each of the sectors.  A $4$-tuple of partial sections (over $a,b,c$ and $d$) yields two triples of partial sections, one triple is over the sectors $a,b,c$, and the other one is over  $a,d,c$. Observe that the value $\frac{K-M}{2}$ behaves additively. Therefore  we need to average $\frac{K-M}{2}$ for both of the triples and sum up. A  computation (analogous to the previous lemma) shows that each triple contributes $\frac{2(n-k)}{(n+k)(n+k+2)(n+k+4)}$.

		\end{proof}

		\begin{lemma}
			For a right essential  vertex of type $III$ (a regular sheet intersects a fold),$$\mathbb{E} (ind_s(x))= \frac{2}{(r+1)(r+3)}.$$
			
			For a left essential  vertex of type $III$, $$\mathbb{E} (ind_s(x))= \frac{-2}{(r+1)(r+3)}.$$
			
		\end{lemma}   
		\begin{proof}
			This can be proven by analogous computations.
		\end{proof}

		Theorem is proven for the case when all the domains $D_i$ are one-connected.
		
		\textbf{Non-one-connected domains $D_i$.}
		
		If there are  domains that are not one-connected, the following trick helps.
		Let us enlarge $\mathcal{Q}$  by adding a  disjoint with $\mathcal{Q}$ connected component.  This new component is a topological sphere.
		Geometrically it looks like a thin ellipsoid (Hausdorff close to a line segment).
		On the one hand, a careful  adding  a number of such components makes all the $D_i$ one-connected, keeps the Euler number, and also keeps the portraits of
		already existing singular vertices.
		On the other hand, there  arise new singular vertices of type $I$. They arise in pairs  that differ on a line symmetry, 
		therefore the corresponding weights are canceled.

	\end{proof}

	\section{Examples of quasisections. Existence and non-existence}\label{SecExamples} 
	
	\subsection{Elementary examples}
	A smooth section  is a quasisection. Indeed, it can be viewed as an embedded closed manifold which projects bijectively to the base $B$.
	We remind the reader that existence of a continuous section  is equivalent to triviality of the circle bundle, that is, to $\mathcal{E}=0$.

	Moreover, if  a quasisection $\mathcal{Q}$  has no singularities (for instance, yields a covering of $B$), 
	then $E\rightarrow B$ is also trivial.

	One can fill $E$ with many mutually disjoint (topological) spheres  and thus get a (disconnected) quasisection with type $I$ singular vertices only.
	Adding  thin connecting tubes  gives a connected embedded quasisection.

	\begin{example} \label{ExTriv} (Quasisections of the trivial bundle.)
		Let   $\phi:Q\rightarrow B$ be a generic smooth surjective map.   Let $f:Q\rightarrow \mathbb{R}$  be a
		generic function  such that  $f(Q)\subset (0,1)$. Assume that the fibers of the bundle are consistently coordinatized by $\mathbb{R}/\mathbb{Z}$.
		Set $q(x)=(\phi(x), f(x))$. Adding $N$ disjoint continuous sections  gives a quasisection.  If $\pi\circ q$ is surjective, the number $N$ may be zero.
		
	\end{example}

	\subsection{Quasisections with odd degree}
	\begin{definition}
		The degree $deg(\mathcal{Q})\in \mathbb{Z}_2$ (or $\mathbb{Z}$) of a quasisection is $deg(\pi \circ q)$. 
	\end{definition}

	\begin{theorem}
		\begin{enumerate}
			\item If $Q$ is orientable,  and $\mathcal{E}\neq 0$, then $deg(\mathcal{Q})=0\in \mathbb{Z}$.
			\item If $Q$ is non-orientable, and $deg(\mathcal{Q})=1\in \mathbb{Z}_2$, then $\mathcal{E}$  is even.
			\item For any even $\mathcal{E}$, there exists  a non-orientable quasisection with $deg=1$.
		\end{enumerate}
	\end{theorem}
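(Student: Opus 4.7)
The plan is to treat parts (1) and (2) uniformly via the pullback bundle, and then reverse the same argument to settle (3). The key observation is that for any quasisection $q \colon Q \to E$, the pulled-back circle bundle $(\pi \circ q)^{*}E \to Q$ carries a tautological section $y \mapsto q(y)$, hence is trivial and has vanishing Euler class. By naturality of the Euler class, $(\pi \circ q)^{*}(\mathcal{E}) = 0$ in $H^{2}(Q;\mathbb{Z})$.

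For (1), since $Q$ is orientable, evaluating this identity on the fundamental class $[Q]$ gives $\deg(\pi \circ q) \cdot \mathcal{E} = 0$ in $\mathbb{Z}$, and the hypothesis $\mathcal{E} \neq 0$ forces the degree to vanish. For (2), the analogous computation with $\mathbb{Z}_{2}$ coefficients applies: since $(\pi \circ q)^{*}$ on $H^{2}(-;\mathbb{Z}_{2})$ is multiplication by the mod-$2$ degree, we obtain $\deg(\pi \circ q) \cdot (\mathcal{E} \bmod 2) = 0$ in $\mathbb{Z}_{2}$, which forces $\mathcal{E}$ to be even whenever the mod-$2$ degree equals $1$.

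For (3) we reverse this argument. Let $Q := B \# \mathbb{RP}^{2}$, which is non-orientable, and let $\phi \colon Q \to B$ be the collapse map: the identity on $B \setminus D_{0} \subset Q$, extended over the attached M\"obius band by any map into $D_{0}$ agreeing with the identity on the common boundary circle (such an extension exists because $D_{0}$ is contractible). A generic point in $B \setminus \overline{D}_{0}$ has a unique $\phi$-preimage, so $\deg(\phi) = 1$ in $\mathbb{Z}_{2}$. Since $H^{2}(Q;\mathbb{Z}) \cong \mathbb{Z}_{2}$, the map $\phi^{*}$ on $H^{2}(-;\mathbb{Z})$ factors through mod-$2$ reduction and is then multiplication by $\deg(\phi)$, so $\phi^{*}(\mathcal{E}) = \deg(\phi) \cdot (\mathcal{E} \bmod 2) = 0$. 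Therefore $\phi^{*}E \to Q$ is trivial and admits a section $\sigma$; setting $q := \operatorname{pr}_{E} \circ \sigma \colon Q \to E$ gives $\pi \circ q = \phi$, and after Whitney approximation followed by a generic perturbation (neither of which changes $\deg \bmod 2$ or the non-orientability of $Q$) it becomes the desired smooth generic quasisection with $\deg(\mathcal{Q}) = 1$.

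The main subtlety lies in (3): the pullback-bundle obstruction argument produces only a \emph{continuous} lift of $\phi$, so one must bridge to the smooth generic framework of the paper by invoking standard smoothing and transversality results, and likewise replace the non-smooth collapse map $\phi$ by a smooth generic representative in its homotopy class (which preserves its mod-$2$ degree). Parts (1) and (2) are essentially immediate consequences of the naturality of the Euler class applied to the tautological section of the pullback.
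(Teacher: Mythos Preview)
Your argument is correct. For parts (1) and (2) your pullback/tautological-section computation is essentially what the paper is invoking when it says these ``follow from the Gysin exact sequence'': the Gysin sequence for $S^1\hookrightarrow E\to B$ gives $\pi^*\mathcal{E}=0$ in $H^2(E)$, and pulling back along $q$ recovers exactly your identity $(\pi\circ q)^*\mathcal{E}=0$.

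For part (3) you take a genuinely different route. The paper proceeds by an explicit surgery (Example~\ref{Exodd}): it takes a section over $B\setminus D$, homotopes its boundary curve inside the solid torus $\pi^{-1}(D)$, and caps off with two discs to obtain a Klein bottle in $E$ whose projection has one-point preimages away from $D$; general even $\mathcal{E}=2k$ is handled by repeating this near $k$ disjoint discs. Your argument is instead obstruction-theoretic: with $Q=B\#\mathbb{RP}^2$ and the degree-one collapse $\phi\colon Q\to B$, evenness of $\mathcal{E}$ forces $\phi^*\mathcal{E}=0$ in $H^2(Q;\mathbb{Z})\cong\mathbb{Z}_2$, so $\phi^*E$ is trivial and $\phi$ lifts to $E$. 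This is cleaner and uniform in $\mathcal{E}$; the paper's hands-on construction, on the other hand, produces a quasisection whose singular vertices one can actually list, and it is reused as a test example in the proof of the uniqueness theorem (Theorem~\ref{ThmUniqueness}).
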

	
	\begin{proof}
		(1)  follows from Gysin exact sequence.

Indeed, let us interpret $\mathcal{Q}$ as  Poincar\`{e}  dual of a cohomology class $\mathbf{q}$. Denote  by $[B]\in H^0(B, \mathbb{Z})$ is the Poincar\`{e} dual of the fundamental class of the base. The part of the exact sequence we need is:
$$ H^1(E,\mathbb{Z})\xrightarrow[\text{}]{\pi_*} H^0(B,\mathbb{Z}) \xrightarrow[\text{}]{\smile\mathcal{ E}} H^0(B,\mathbb{Z}). $$

The Gysin homomorphism sends $\mathcal{Q}$ to $deg(\mathcal{Q})\cdot [B]$. The cup-product with the Euler class amounts to multiplication by Euler number.
	Since the composition	$\pi_*(\mathbf{q})\smile \mathcal{E}= deg(\mathcal{Q})\cdot \mathcal{E}\cdot [B]$   vanishes,  either $\mathcal{E}$ or $deg(\mathcal{Q})$ vanish, which proves (1).  The statement (2) follows analogously from the Gysin sequence with $\mathbb{Z}_2$ coefficients.
		
		An example for  (3) is below.
	\end{proof}
	
	\begin{example}\label{Exodd}
		Assume $\mathcal{E}(E\rightarrow B)=2$. Take the base $B$ and a small disk $D\subset B$. The restriction of the bundle to $B\setminus Int(D)$ is trivial,
		so let us choose a continuous section $s$ over $B\setminus Int(D)$. One  considers $s$ as a bordered surface $S\subset E$. By construction, $\partial S\subset \pi^{-1}(\partial D)=\partial D\times S^1$.  We may assume that $\partial S$ looks as  in Fig. \ref{FigExOdd}, left. Homotope $S$ as is depicted in Fig. \ref{FigExOdd}, center. Next patch two discs to  $\partial S$ (Fig. \ref{FigExOdd}, right). One disc projects to $D$, and the
		other one embeds in $\partial D\times S^1$. We get a closed surface (one can check that it is the Klein bottle) which is piecewise smoothly mapped to $E$.
		A  perturbation turns it to a smooth  quasisection.

 Our surgery acts in a neighborhood of $D$. Therefore the degree of $\pi\circ q$ equals $1$ since  each of the points away from $D$ has a one-element preimage.

A quasisection for arbitrary  even $\mathcal{E}=2k$ comes from an analogous construction for $k$ small disks $D_i \subset B$.
	\end{example}
	
	\begin{figure}[h]
		\includegraphics[width=0.5\linewidth]{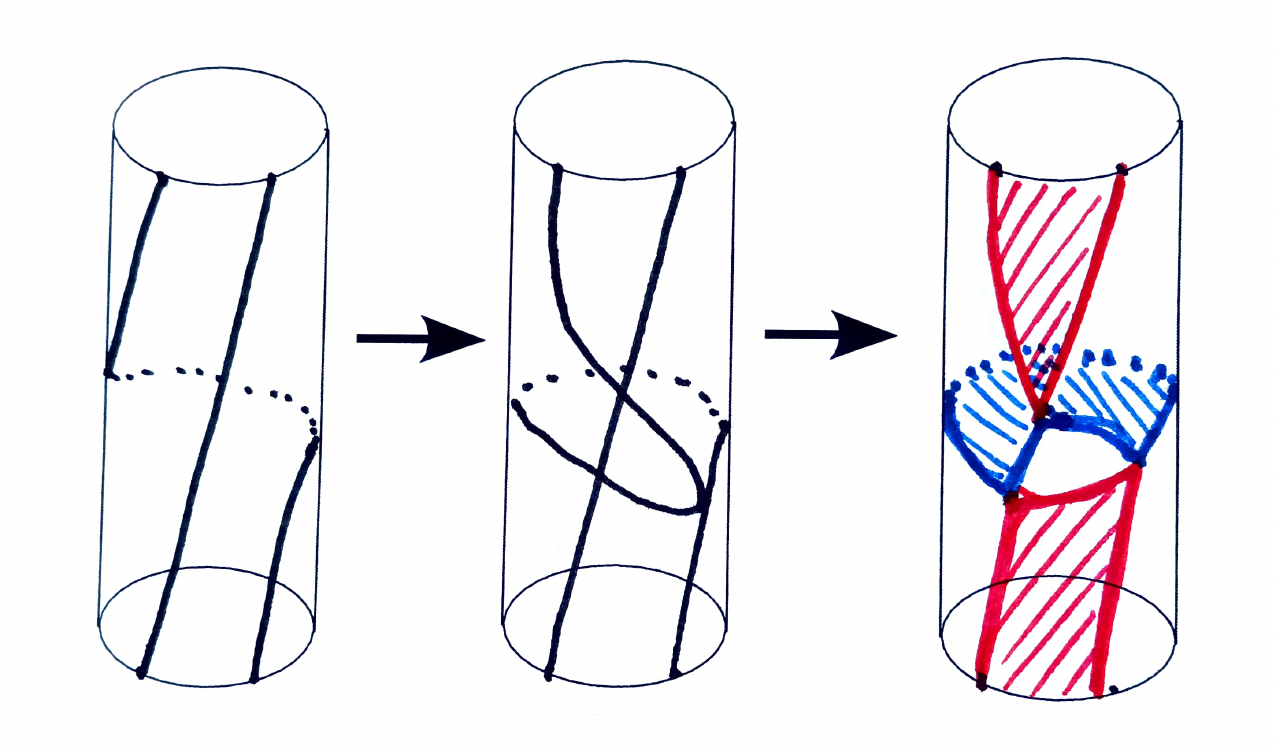}
		\caption {A quasisection with $deg=1$.}
		\label{FigExOdd}
	\end{figure}

	\subsection{Pancake quasisections}
	
	A \textit{pancake quasisection}  
	is an embedding of a finite number of topological spheres in $E$ such that:
	\begin{enumerate}
		\item Each of the spheres projects with no pleats and with a unique circular fold.
		\item The projections of the spheres 
		have at most triple intersections.
	\end{enumerate}
	
	The spheres  are called  \textit{ pancakes}   since one imagines  them to be very much flattened.  
	
	\medskip
	
	Assume we have a pancake quasisection, and $P_1,P_2$ and $P_3$ are pancakes that intersect in the projection.
	The triple produces three singular vertices  (the innermost intersection points)  of   type $I$. Each of them has weight either $1/6$, or $-1/6$, depending on the orientation of the triple $P_1,P_2, P_3$.  Therefore the local formula reads as
	$$\mathcal{E}(E\rightarrow B)=\frac{1}{2}(\hbox{ number of positively oriented triples of pancakes } -$$ $$\hbox{ number of negatively oriented triples}).$$

	\iffalse Besides, such a triple has an orientation, see Fig. \ref{FigPancakes}.

	A straightforward application of Theorem \ref{ThmMain} gives:
	\begin{theorem}
		\begin{enumerate}
			\item Each of the singular vertices has weight either $1/6$, or $-1/6$, depending on the orientation of the triple, see Fig. \ref{}.
			\item The Euler class of a bundle with a pancake quasisection equals
			$$\mathcal{E}(E\rightarrow B)=\frac{1}{2}(\hbox{ number of positively oriented triples of pancakes } -$$ $$\hbox{ number of negatively oriented triples of pancakes}.$$
		\end{enumerate}
		
	\end{theorem}
	
	\fi      
	\begin{figure}[h]
		\includegraphics[width=0.4\linewidth]{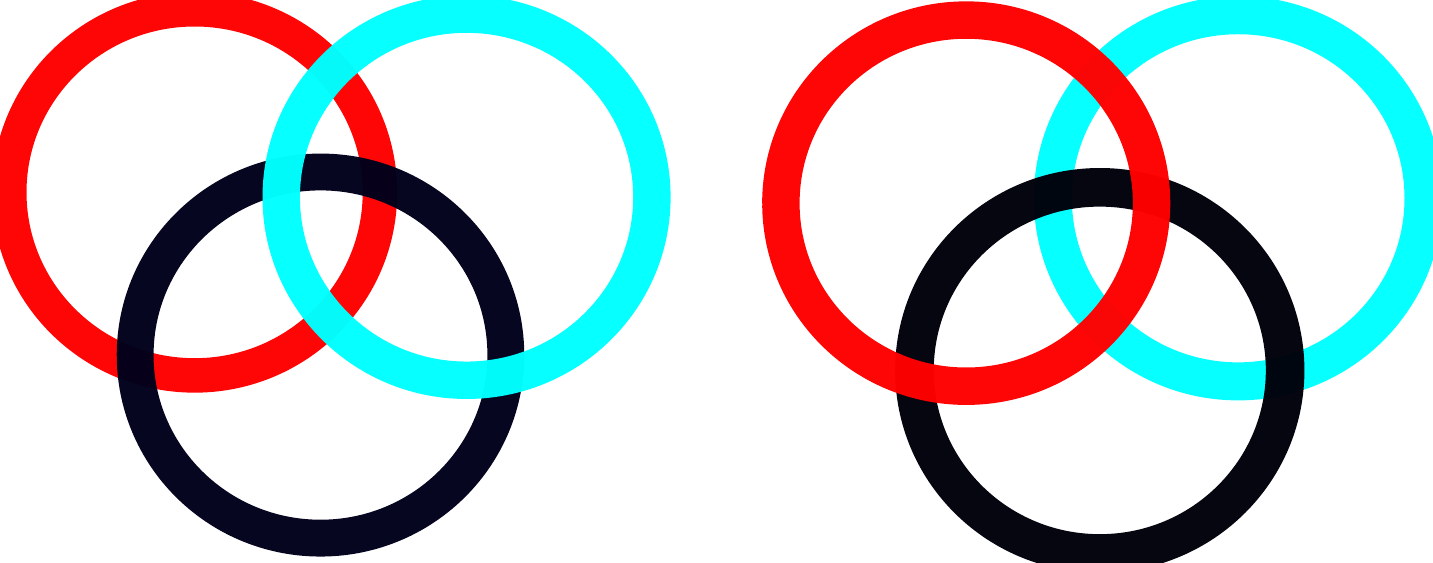}
		\caption {Two triples of pancakes, a negatively oriented (left) and a positively oriented  (right).}
		\label{FigExtensionEdges}
	\end{figure}

	\begin{example}\label{Ex4pancakes}
		The circle bundle over the sphere $S^2$ with $\mathcal{E}=2$ has a pancake quasisection with four pancakes.
		All the $12$ singular vertices are essential and have one and the same type $I$ with $n=2$, $k=0$.
	\end{example}

	\begin{example}\label{ExCrossingPanA}(Two crossing pancakes, A)

		Let  $E\rightarrow B$ be a circle bundle with $\mathcal{E}=1$ over $B=S^2$. Let $U\subset B$ be a disk. The bundle has a continuous section $s_1$ over $U$,
		and a continuous section $s_2$ over $B \setminus U$.  We think of these partial sections as of subsets of $E$. Take the $\varepsilon$-neighborhoods
		of $s_1$ and $s_2$ and their boundaries. We get two ''very flat'' topological spheres (also called pancakes), but unlike the previous example, the pancakes are now crossing. The only essential vertices in this case are $4$ right essential vertices of type $III$ with $r=1$.  

If $\mathcal{E}\geq 0$  is arbitrary, a similar construction gives $4\mathcal{E}$ right essential vertices of type $III$ with $r=1$
	\end{example}
      
\begin{example}\label{ExCrossingPanB}(Two crossing pancakes, B)

		Let  $E\rightarrow B$ be a trivial bundle over $B=S^2$. Take $n$ continuous disjoint sections and two crossing discs in $E$.  Replacing the discs by their $\varepsilon$-neighborhoods gives two crossing pancakes.  We arrive at a quasisection with $4$ singular vertices of type $III$, and $2$ singular vertices of type $I$.

	\end{example}
	\begin{example}\label{CurledPancake} (Curled pancake)
		 Consider a trivial bundle over $S^2$. Let $\mathcal{Q}$ consist of  $N$ disjoint continuous sections one curled pancake, see Fig. \ref{FigCurled}. There are two essential vertices of type $II$ and one of type $I$. 
	\end{example}

\begin{figure}[h]
		\includegraphics[width=0.3\linewidth]{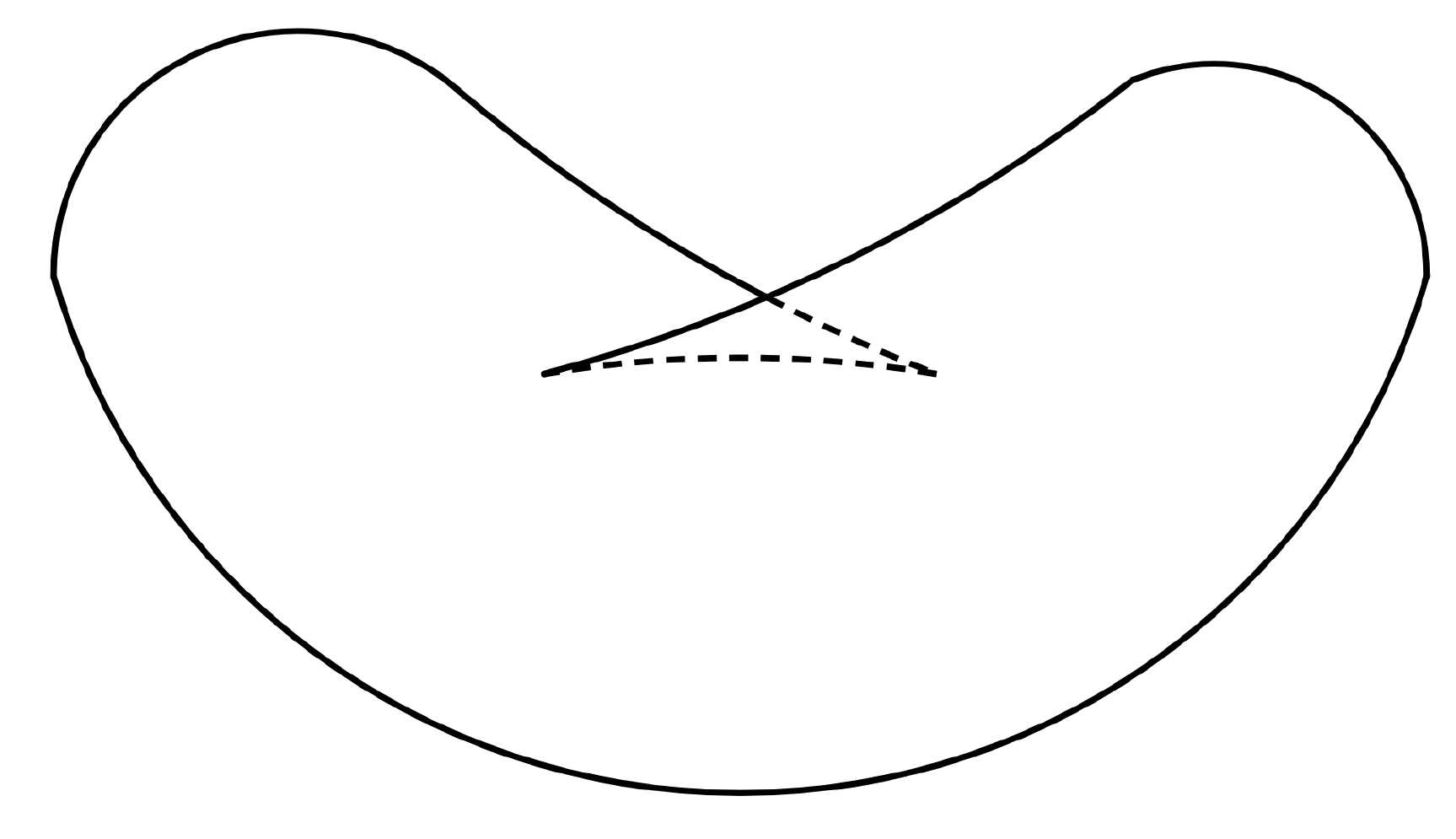}
		\caption {A curled pancake}
		\label{FigCurled}
	\end{figure}

	\section{Uniqueness of the local formula}  \label{SecUni}
	
	\begin{definition}
		A \textit{local formula $\mathcal{F}$  
computing the Euler number via singularities of quasisections} is a triple of functions
		$$\mathcal{F}_{ff}: (\mathbb{N}\cup \{0\})\times (\mathbb{N}\cup \{0\})\setminus \{0,0\}\rightarrow\mathbb{ R},$$  $$\ \mathcal{F}_p:(\mathbb{ N}\cup \{0\})\times \{L,R\}\rightarrow\mathbb{ R}, $$ $$
		\ \hbox{and}\ \ \mathcal{F}_{fs}: (\mathbb{ N}\cup \{0\})\times \{L,R\}\rightarrow\mathbb{ R}$$
		
		such that for any circle bundle $E\rightarrow B$ and any quasisection $\mathcal{Q}$
		$$\mathcal{E}(E\rightarrow B)= \sum_I \mathcal{F}_{ff}((n,k)(x_i))+ \sum_{II} \mathcal{F}_{p}(n(x_i))+\sum_{III} \mathcal{F}_{fs}(n(x_i)),$$
		
		where $x_i$ ranges over essential vertices of $\mathcal{Q}$, and the numbers $n$  (and $k$ for the type $I$) are those defined in Section \ref{SecPortraits}.
	\end{definition}
	
	\begin{theorem}\label{ThmUniqueness}
		Any local formula  coincides  with that from Theorem \ref{ThmMain}.
		That is, we always have
		$$\mathcal{F}_{ff}(n,k)\equiv \mathcal{W}_{ff}(n,k), \ \mathcal{F}_{p}(n)\equiv \mathcal{W}_{p}(n), \ \mathcal{F}_{fs}(n)\equiv \mathcal{W}_{fs}(n).$$
	\end{theorem}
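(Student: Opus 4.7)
\emph{Reduction.} The plan is to set $\mathcal{G}:=\mathcal{F}-\mathcal{W}$, where $\mathcal{W}$ denotes the weight system of Theorem~\ref{ThmMain}. Since both $\mathcal{F}$ and $\mathcal{W}$ compute the Euler number, $\mathcal{G}$ is a local expression that evaluates to zero on every quasisection. The task therefore reduces to showing that the three functions $\mathcal{G}_{ff}$, $\mathcal{G}_p$, $\mathcal{G}_{fs}$ vanish identically. I would prove this by constructing, for each admissible parameter value, an explicit family of quasisections whose essential vertices are completely understood, so that the vanishing identity $\sum \mathcal{G}=0$ yields a linear system whose only solution is $\mathcal{G}\equiv 0$.

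\emph{Test quasisections.} I would exploit the templates of Section~\ref{SecExamples}, enriched by two elementary moves: (a) adding $m$ disjoint parallel sections to a trivial sub-bundle, which inserts $m$ simple circles into every portrait at a prescribed location and shifts $r$ (resp.\ $n+k$) at existing essential vertices by a controlled amount without creating new essential vertices; and (b) reversing the orientation of the base $B$, which negates $\mathcal{E}$ and interchanges the labels $R\leftrightarrow L$ for types~II and~III and the coordinates $(n,k)\leftrightarrow(k,n)$ for type~I. Applied to Example~\ref{Ex4pancakes} these moves produce quasisections over $S^2$ whose only essential vertices are of type~$I$ with a single controllable $(n,k)$; applied to Example~\ref{CurledPancake} they isolate type~$II$ contributions (pairing the two chiralities with a cancelling type~$I$); applied to Examples~\ref{ExCrossingPanA}--\ref{ExCrossingPanB} they isolate type~$III$ contributions. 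Additional local surgeries---sliding a regular sheet across a fold, or passing a disjoint section through a pleat---produce further relations linking $\mathcal{G}_{fs}$, $\mathcal{G}_p$ and $\mathcal{G}_{ff}$ at neighbouring parameter values.

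\emph{Solving and the main obstacle.} From the pancake examples together with the shift move~(a), I expect to obtain recursions of the form $\sum_i p_i\,\mathcal{G}_{ff}(n_i,k_i)=0$ which, combined with the antisymmetry under the orientation reversal~(b), pin down $\mathcal{G}_{ff}\equiv 0$ by induction on $n+k$, starting from the base case $(n,k)=(2,0)$ supplied by Example~\ref{Ex4pancakes}. Analogous recursions in $r$, anchored at the minimal cases $r=0$, handle $\mathcal{G}_p$ and $\mathcal{G}_{fs}$. The main obstacle is the delicate bookkeeping required to certify the local moves: whenever we add sections, slide sheets, or perform a pancake surgery, we must verify that no uncontrolled new essential vertex is produced and that the shifts at existing vertices agree exactly with those predicted. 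This verification reduces to a case analysis based on the portrait classification in Lemma~\ref{LemmaPortr}, but exhausting the cases while keeping the resulting linear system of full rank in every parameter slot is where the real work lies.
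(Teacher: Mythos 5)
Your overall strategy (subtract the known weights, then force the difference to vanish by evaluating the local formula on controlled families of quasisections and solving the resulting linear relations) is the same as the paper's, but there is a genuine gap at the heart of the induction: a parity obstruction that your choice of test quasisections cannot overcome. All of your building blocks --- pancakes, curled pancakes, disjoint sections added to a trivial bundle, the crossing-pancake examples --- produce quasisections of even degree, and every local move you list preserves the degree. But the parameter values $\mathcal{F}_{ff}(n,k)$ with $n+k$ odd, and $\mathcal{F}_{p}(r,\cdot)$, $\mathcal{F}_{fs}(r,\cdot)$ with $r$ even (in particular your intended anchors $r=0$ and, implicitly, $(n,k)=(1,0)$) occur only at vertices of odd-degree quasisections: a pleat with $r=0$ forces the local sheet count to be $1$ or $3$, hence odd degree. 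So ``anchoring the recursion at the minimal cases $r=0$'' is impossible within your family of examples; the even-degree examples simply never exhibit those vertices. This is exactly why the paper runs two separate bases of induction, and why the odd base requires genuinely different inputs: the Boy surface placed in $S^2\times S^1$ (degree $1$, three pleats, a triple point) and the degree-one quasisection of the $\mathcal{E}=2$ bundle of Example~\ref{Exodd}, which together give an inhomogeneous system pinning down $\mathcal{F}_{ff}(1,0)$, $\mathcal{F}_{p}(0,R)$, $\mathcal{F}_{fs}(0,R)$. Without some odd-degree example your induction cannot start in that parity class, so as written the argument cannot determine half of the values of $\mathcal{F}$.

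Two further points. First, your claim that the moves applied to Example~\ref{Ex4pancakes} yield quasisections ``whose only essential vertices are of type $I$ with a single controllable $(n,k)$'' does not hold: that bundle has $\mathcal{E}=2$, so you cannot add disjoint global sections to it, and in the trivial-bundle pancake configurations the six (or twelve) type~$I$ vertices carry several different pairs $(n,k)$ coupled in a single equation. One is forced, as in the paper, to treat these as a linear system in all the unknowns $\mathcal{F}_{ff}(m+2-a,a)$ and prove that the associated homogeneous system (of the form $g(a)+g(b)+g(c)=0$, $g(a)+g(m+2-a)=0$) has only the trivial solution; this rank argument is the actual content of the type~$I$ induction step, not bookkeeping to be deferred. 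Second, reversing the orientation of $B$ only yields the relation $\sum_x\bigl(\mathcal{F}(x)+\mathcal{F}(\bar{x})\bigr)=0$ summed over all vertices; it does not by itself give the pointwise antisymmetry $\mathcal{F}(x)=-\mathcal{F}(\bar{x})$ that your induction uses. The paper obtains pointwise antisymmetry (Lemma~\ref{LemmaAntisymm}) by local pair-creation moves --- two small crossing pancakes, a wrinkle creating a left--right pair of pleats, a pancake crossing a regular sheet --- which add exactly one mirror pair of essential vertices and nothing else; you should replace your move (b) by these surgeries (which you partially allude to) and verify, via the portrait classification, that no other essential vertices are created.
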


	The proof is based on examples from Section \ref{SecExamples}  and goes by induction.

	We apply local transformations to a quasisection: add pancakes or other connected components, create wrinkles, etc. These transformations keep the bundle, and therefore, the Euler number, but change the  essential singular vertices and their weights in a controllable way.
	
	Since local transformations keep the degree of the quasisection, we need two bases of induction, one for $deg=1$ and the other for $deg=0$.

\medskip

So assume we are given some local formula $\mathcal{F}$.

	\begin{lemma} \label{LemmaAntisymm}(The antisymmetry property of $\mathcal{F}$) \begin{enumerate}
			
			\item $\mathcal{F}_{ff}(n,k)\equiv -\mathcal{F}_{ff}(k,n)$.
			\item $\mathcal{F}_p(n,R)=-\mathcal{F}_p(n,L)$.
			\item $\mathcal{F}_{fs}(n,R)\equiv -\mathcal{F}_{fs}(n,L)$.
			
		\end{enumerate}
		
	\end{lemma}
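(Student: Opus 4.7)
The plan is to prove each of the three identities by exhibiting a \emph{local birth modification} of a quasisection: starting from any quasisection $\mathcal{Q}_0$ of some bundle $E\to B$, deform it inside a small ball of $E$ to a quasisection $\mathcal{Q}_1$ whose only new essential vertices form a pair with mutually line-symmetric portraits. Since the bundle does not change, neither does $\mathcal{E}$; applying the local formula $\mathcal{F}$ to $\mathcal{Q}_0$ and to $\mathcal{Q}_1$ yields the same number, so the contributions of the two new vertices must cancel. This is exactly the antisymmetry asserted in the lemma.

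For (2) I would use the standard pleat birth (a small wrinkle on a regular sheet): inside a disk in $E$ projecting injectively to a disk $U\subset B$, deform one regular sheet so that its image develops a shallow fold whose singular curve is an arc joining a right pleat to a left pleat, both with the same number $r$ of surrounding simple circles. The count $r$ depends only on how many other sheets of $\mathcal{Q}_0$ pass above $U$, and can be set to any value by placing auxiliary parallel sections beforehand. The connecting fold arc lies entirely in a regular region, so no other essential vertex is created, and hence $\mathcal{F}_p(r,R)+\mathcal{F}_p(r,L)=0$.

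For (3) I would perform the analogous birth for a type $III$ vertex: push a small arc of a regular sheet of $\mathcal{Q}_0$ through a small arc of a fold of $\mathcal{Q}_0$. At the instant of tangency the configuration is codimension one in the space of quasisections; a generic further push replaces this tangency by two transverse sheet--fold crossings, one right and one left, with the same $r$. No other singular structure is disturbed, so $\mathcal{F}_{fs}(r,R)+\mathcal{F}_{fs}(r,L)=0$.

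For (1) I would push a segment of the projection of one fold to form a small bump that crosses the projection of a second fold transversally in two new points. Above these two new base points the two fold sheets in $E$ are the same two sheets, so the new portraits are determined by the cyclic order in which a point travelling ccw along $C_x$ meets the two fold curves; that order is reversed between the two crossings, which swaps the labels \emph{first fold}/\emph{second fold} and therefore swaps the numbers $n$ and $k$. Auxiliary disjoint sections placed beforehand adjust the counts of simple circles in the two sectors independently, so any pair $(n,k)$ with $n+k>0$ is realised. This gives $\mathcal{F}_{ff}(n,k)+\mathcal{F}_{ff}(k,n)=0$. The main obstacle is precisely this last geometric claim: one must check by a local picture of the deformation that, during the birth of the two fold crossings, the simple circles in the annulus are transported continuously and correctly between the two new portraits, so that the pair produced is exactly $(n,k),(k,n)$ rather than some other pair with the same total $n+k$.
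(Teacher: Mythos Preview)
Your overall strategy---introduce a pair of new essential vertices with mirror-symmetric portraits by a local modification that leaves $\mathcal{E}$ unchanged, then conclude that their $\mathcal{F}$-weights cancel---is exactly the paper's strategy, and for part~(2) your construction (a wrinkle on a regular sheet) is identical to the paper's.

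For parts~(1) and~(3) your constructions differ from the paper's, and the paper's are somewhat cleaner. For~(1) the paper does not deform an existing fold across another; instead it \emph{adds} two small disjoint pancakes (disjoint from $\mathcal{Q}$ and from each other in $E$) whose projections intersect in two points. The two new type~$I$ vertices then manifestly have parameters $(n,k)$ and $(k,n)$, because the only difference between the two crossings is the cyclic order in which the two pancake folds are met; and since the pancakes are new components at prescribed heights, the values of $n$ and $k$ are set directly, with no need to track how simple circles are ``transported'' through a deformation. For~(3) the paper likewise \emph{adds} a small pancake that crosses $\mathcal{Q}$ near a regular point, producing the $(r,R)/(r,L)$ pair, rather than pushing an existing sheet through an existing fold. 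Your deformation arguments are correct in principle, but the additive constructions sidestep precisely the obstacle you flag at the end: with disjoint added components there is nothing to verify beyond the placement.
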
 
	
	\begin{proof}
\begin{enumerate}
  \item Take a quasisection $\mathcal{Q}$  and add two small disjoint pancakes,  whose projections  intersect at two points. We assume that the pancakes do not intersect $\mathcal{Q}$, and the projections of  two new folds do not intersect the already existent singular curves. There arise two new singular vertices of type $I$, one with some parameters $(k,n)$, and the other with $(n,k)$. Since all other singular vertices and their weights  persist, and  $\mathcal{E}$  persists as well, the  weights of the new singular vertices  cancel each other.
  
  \item Take a  regular part of a quasisection $\mathcal{Q}$  and create a small wrinkle, that is, two pleats connected with folds. The parameters of the pleats are $r,L$ and $r,R$.
  Since all other singular vertices persist, and  $\mathcal{E}$  persists as well, the weights of the two pleats  cancel each other.
  \item  Take a regular point of $\mathcal{Q}$ and add a pancake that crosses $\mathcal{Q}$ in the small neighborhood of this point. There appear two new singular vertices of type $III$, one is $(n, R)$, and the other is $(n, L)$.   By similar reasons their weights   cancel each other.
\end{enumerate}
		
	\end{proof}

	\begin{lemma}\label{LemmaInduction1}(Induction step for Type $I$)  
	
			  $\mathcal{F}_{ff}(n,k)$  is uniquely determined by  $\mathcal{F}_{ff}(1,0),\mathcal{F}_{ff}(2,0),$ and $\mathcal{F}_{ff}(3,0)$. 
		
	\end{lemma}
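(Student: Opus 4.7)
I would proceed by induction on $s := n + k$. The antisymmetry $\mathcal{F}_{ff}(k, n) = -\mathcal{F}_{ff}(n, k)$ from Lemma \ref{LemmaAntisymm}(1) (which also forces $\mathcal{F}_{ff}(m, m) = 0$) reduces the problem to the values with $n > k$. At $s \leq 3$ the only unknown not among the three given base values is $\mathcal{F}_{ff}(2, 1)$, and at $s \geq 4$ one has to derive every value with $n > k$.

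The inductive step, for each target pair $(n, k)$ with $n > k$ and $s \geq 3$, is to exhibit a \emph{calibrating} quasisection whose essential singular vertices fall into three classes: (i) one distinguished type $I$ vertex of parameters exactly $(n, k)$; (ii) type $I$ vertices with strictly smaller $s$-value; (iii) pleats and type $III$ vertices that occur in $R/L$-symmetric pairs and therefore cancel by Lemma \ref{LemmaAntisymm}(2)--(3). Given such a quasisection, the identity $\mathcal{E} = \sum_i \mathcal{F}(x_i)$ expresses $\mathcal{F}_{ff}(n, k)$ as a linear combination of $\mathcal{F}_{ff}$-values at strictly smaller $s$, and the induction hypothesis then writes it in terms of $\mathcal{F}_{ff}(1, 0), \mathcal{F}_{ff}(2, 0), \mathcal{F}_{ff}(3, 0)$. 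A natural candidate construction starts from a small pancake configuration (as in Examples \ref{ExCrossingPanB} or \ref{Ex4pancakes}) and adjoins a suitable number of parallel continuous sections whose heights are placed so that, at one chosen pancake-pancake crossing, exactly $n$ simple circles lie on the ``first''-fold side and $k$ on the ``second''.

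The principal obstacle is the geometric/combinatorial verification that these calibrating quasisections genuinely exist with the described structure: the section heights must be tunable to realize every prescribed $(n, k)$ at the distinguished crossing; no extra type $I$ vertex of $s$-value $\geq n + k$ may sneak in at the other pancake crossings (which may force shrinking some pancakes or adding small auxiliary ones to break unwanted coincidences); and all section/fold intersections must cleanly produce $(r, R)/(r, L)$-pairs of type $III$ vertices rather than accidental tangencies. These are essentially case-by-case checks on the pancake-plus-sections arrangement; once they are in place, the algebraic induction giving $\mathcal{F}_{ff}(n, k)$ in terms of the three base values is immediate.
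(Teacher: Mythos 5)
The gap is precisely the step you defer to ``case-by-case checks'': the calibrating quasisections with a \emph{single} distinguished type $I$ vertex of parameters $(n,k)$ and all other type $I$ vertices of strictly smaller $n+k$ do not exist in the class you propose, so the verification is not routine --- it fails. Note that at a type $I$ vertex the number $n+k$ is the number of sheets of $\pi\circ q$ over the sector of smallest multiplicity, which for a configuration of embedded pancakes plus $N$ global sections equals $N+2d$, where $d$ is the number of other pancakes whose projection covers the vertex. Take a vertex $v$ maximizing $d$, a crossing of $\partial\pi(P_i)$ and $\partial\pi(P_j)$ covered by the pancakes of a set $S$ with $|S|=d$, and consider the component of the interior of $\pi(P_i)\cap\pi(P_j)\cap\bigcap_{k\in S}\pi(P_k)$ adjacent to $v$. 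Its boundary leaves $v$ along $\partial\pi(P_i)$ and must return to $v$ along $\partial\pi(P_j)$; since these are distinct embedded circles, the boundary has a second corner, which by genericity and maximality of $d$ is another fold--fold crossing covered by exactly $d$ pancakes, i.e.\ a second vertex with the same maximal value of $n+k$ (in the paper's three-pancake configuration there are three such). So your single relation inevitably contains several unknowns at the top level $n+k$. The known ways to manufacture a unique type $I$ vertex (a curled pancake, Example \ref{CurledPancake}) instead produce two pleats of the \emph{same} handedness, which do not cancel as in Lemma \ref{LemmaAntisymm}, so the relation then involves unknown values of $\mathcal{F}_p$ and the induction no longer closes within $\mathcal{F}_{ff}$ and the three base values.

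The paper's proof is built around exactly this obstruction: it takes three pancakes with a triple overlap and $m$ sections distributed as $a,b,c$ between consecutive pancakes, obtaining for every $(a,b,c)$ with $a+b+c=m$ a relation involving the three level-$(m+2)$ unknowns $\mathcal{F}_{ff}(a+b+2,c)$, $\mathcal{F}_{ff}(b+c+2,a)$, $\mathcal{F}_{ff}(c+a+2,b)$ together with known level-$m$ values, and then the substantive step is linear algebra: combining these with the antisymmetry relations of Lemma \ref{LemmaAntisymm}, one must show the associated homogeneous system $g(a)+g(b)+g(c)=0$ (for $a+b+c=m$), $g(a)+g(m+2-a)=0$ has only the trivial solution. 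That uniqueness-of-solution argument, together with the separate handling of the low values (e.g.\ $\mathcal{F}_{ff}(2,1)$, which the paper obtains later from curled-pancake relations and Lemma \ref{LemmaInduction3} rather than from the three base values directly), is the actual content of the lemma and is absent from your plan; without it, or without a genuine existence proof for your calibrating quasisections (ruled out above for pancakes-plus-sections), the induction step is not established.
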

		\begin{proof}

 We shall prove that $\mathcal{F}_{ff}(n,k)$  is a linear combination of  $\mathcal{F}_{ff}(1,0),\mathcal{F}_{ff}(2,0),$ and $\mathcal{F}_{ff}(3,0)$ with coefficients depending on $n$ and $k$. The proof goes by induction. Consider a quasisection $\mathcal{Q}$ of the trivial bundle over $S^{2}$  which is a disjoint union of three disjoint pancakes $P_1,P_2,P_3$ (whose projections have a triple intersection), and $m\geq 2$ disjoint continuous sections, called \textit{simple sheets}.  The quasisection has  $6$ singular vertices, all of them of type $I$ . Assume that there are $a$ simple sheets between pancakes $P_1$ and $P_2$, $b$ simple sheets lying between $P_2$ and $P_3$, and $c=m-a-b$ simple sheets between $P_3$ and $P_1$. 

We shall use  the local formula: 
$$ \mathcal{F}_{ff}(a+b+2,c)+\mathcal{F}_{ff}(b+c+2,a)+\mathcal{F}_{ff}(c+a+2,b)+$$ $$+\mathcal{F}_{ff}(a+b,c)+\mathcal{F}_{ff}(b+c,a)+\mathcal{F}_{ff}(c+a,b)=0, $$

and also   Lemma \ref{LemmaAntisymm}: $$\mathcal{F}_{ff}(s,m+2-s)=-\mathcal{F}_{ff}(m+2-s,s).$$
  
  %%%%%%%%%%%%%%%%%%%%%%%%%%%%%%%%%%%
  
 Assuming   that $\mathcal{F}_{ff}(a+b,c)+\mathcal{F}_{ff}(b+c,a)+\mathcal{F}_{ff}(c+a,b)$ for all $a+b+c=m$ is known, prove  that
$\mathcal{F}_{ff}(n,k)$  is uniquely defined for $n+k=m+2$.

			For a fixed $m$, we arrive at a  system of linear equations  with unknowns \\ $\mathcal{F}_{ff}(m+2-a,a), \ \ a=0,1,...,m+2$.  To prove the uniqueness of its solution, let us consider the  corresponding homogenous system.
			It reads as $$\left\{
             \begin{array}{ll}
              g(a)+g(b)+g(c)=0 , & \hbox{  for $a+b+c=m$;} \\
               g(a)+g(m+2-a)=0, & \hbox{for $0 \leq a \leq m+2$.}
             \end{array}
           \right.$$

			Since $g(0)+g(0)+g(m)=0,$ and $ g(2)+g(n)=0$, we get $g(2)=2g(0)$. Since $g(1)+g(1)+g(n-2)=g(0)+g(2)+g(n-2)$, then $2g(1)=g(0)+g(2)$, so $g(1)=\frac{3}{2}g(0)$. Proceeding in a similar way, we get  $g(k)=(\frac{k}{2}+1)g(0)$ for all $k$. The second equation
gives  $0=g(0)+g(m+2)=g(0)(2+\frac{n+2}{2})$, which implies
 $g(0)=0$ and completes the proof. \\ 
			\end{proof}

\begin{lemma}\label{LemmaInduction3}(Induction step for Type $II$)

$$\mathcal{F}_p(n+2,R)=\mathcal{F}_p(n,R)-\mathcal{F}_{ff}(n+1,0)+\mathcal{F}_{ff}(n,1)$$

	\end{lemma}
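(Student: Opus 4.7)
The strategy is to exhibit two quasisections on the same circle bundle which differ only in a local region, in such a way that the difference between their collections of essential vertices is exactly: one right pleat with parameter $n+2$ versus one right pleat with parameter $n$, together with one type $I$ vertex with parameters $(n+1,0)$ versus one type $I$ vertex with parameters $(n,1)$. Since the two quasisections share the same Euler number and all other essential vertices, the local formula $\mathcal{F}$ must assign equal total weights to the differing vertices in the two configurations, yielding
$$\mathcal{F}_p(n+2,R)+\mathcal{F}_{ff}(n+1,0)=\mathcal{F}_p(n,R)+\mathcal{F}_{ff}(n,1),$$
which is equivalent to the claim.

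Concretely, I would start from a quasisection containing a right pleat of parameter $n+2$, arranged so that one of the $n+2$ simple circles in the small sector comes from a nearby pancake whose fold curve can be isotoped. After an auxiliary deformation the pancake fold will intersect one of the two folds emanating from the pleat, producing a type $I$ vertex whose portrait has parameters $(n+1,0)$. The key move is to slide this intersection point along the pleat's fold and past the pleat tip; the move passes through a codimension-one degeneration where the pancake fold is tangent to the pleat tip. On the far side of the degeneration, the pancake has been expelled from the small sector, so two simple circles (one from each sheet of the pancake) leave it, the pleat becomes a right pleat of parameter $n$, and the surviving fold--fold intersection, now on the opposite side of the pleat, has parameters $(n,1)$. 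All remaining essential vertices of the quasisection, and their portraits, are unaffected.

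The main obstacle is a careful check that the portraits transform precisely as claimed, in particular that the small sector of the pleat loses exactly two simple circles under the move and that the ordering of the two folds at the surviving type $I$ vertex gives parameters $(n,1)$ rather than $(1,n)$. I would resolve this by writing down a standard swallowtail-type local model of the move in a product chart over the base, and directly counting, in each of the two configurations, the simple circles appearing between the two relevant fold curves in the direction of the fiber, as prescribed by the definition of $n,k$ in Section \ref{SecPortraits}.
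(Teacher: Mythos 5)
Your overall mechanism is the same as the paper's -- compare two generic quasisections of the same bundle that differ only in a controlled local way, and equate the two evaluations of the local formula -- but your local move is genuinely different. The paper simply adds a small pancake, disjoint from the rest of $\mathcal{Q}$, placed right over the pleat with its shadow embracing the projection of the pleat: the pleat parameter jumps from $n$ to $n+2$, and the pancake fold crosses each of the two branches of the cusp once, creating two type $I$ vertices (with parameters $(n+1,0)$ and $(1,n)$); the stated identity then follows after rewriting $\mathcal{F}_{ff}(1,n)=-\mathcal{F}_{ff}(n,1)$ via Lemma \ref{LemmaAntisymm}. Your version instead slides a single fold--fold crossing past the cusp tip, trading one type $I$ vertex for another, and yields the identity directly without invoking antisymmetry. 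That is legitimate: only the two terminal generic configurations matter, so it is harmless that the intermediate position (pancake fold through the cusp point, tangent to the cusp direction) is not a generic one-parameter transition -- note that the generic fold-through-cusp transition changes the number of crossings by two, and is in effect the paper's move.

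Before your sketch counts as a proof, three points need to be nailed down. (i) The pancake contributes two simple circles to the pleat portrait (its upper and lower sheets), not one; your final count (``two simple circles leave'') is the correct one, and in the first configuration the cusp point must lie in the interior of the pancake shadow. (ii) Since the pancake fold is a closed curve and, in configuration A, the cusp lies inside its shadow while the pleated sheet's fold curve leaves that shadow, the two fold curves necessarily meet at further points away from the cusp; you must arrange the move so that these far-away crossings and their portraits are untouched -- possible, but it should be stated. (iii) The parameters $(n+1,0)$ and $(n,1)$ are not automatic: they hold only for a specific fiberwise arrangement, e.g.\ all $n$ simple sheets lying between the pleated sheet and the pancake in the fiber direction. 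With a different arrangement ($j$ of the $n$ sheets in that gap) the same move gives a relation of the form $\mathcal{F}_p(n+2,R)+\mathcal{F}_{ff}(j+1,n-j)=\mathcal{F}_p(n,R)+\mathcal{F}_{ff}(j,n+1-j)$, which is true for the actual weights but is not the stated identity, so it would not serve as the induction step. Hence the ``careful check'' you defer must include fixing this arrangement and the ordering/orientation conventions of Section \ref{SecPortraits}; with that done, your argument goes through and is comparable in rigor to the paper's own (which also leaves the parameter identification of its two new type $I$ vertices implicit).
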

\begin{proof}
  Assume  a quasisection has a right pleat with $n$ simple  circles in the portrait. Add a small pancake  right over the pleat whose projection embraces the projection of the  pleat.  The pleat turns to a pleat with $n+2$ simple  circles in the portrait. Besides, there arise two type $I$ singular vertices. Since the Euler class persists, the difference of two local formulae is zero.
\end{proof}

	\begin{lemma} (Type $III$  reduces to Type $I$)
 $$\mathcal{F}_{fs}(n + 1,R)=1/2\mathcal{F}_{ff}(n,0).$$
	\end{lemma}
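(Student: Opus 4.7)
The proof will follow the same template as Lemmas~\ref{LemmaAntisymm} and~\ref{LemmaInduction3}: I construct a local modification of a quasisection that alters its essential singular vertices by exactly one new type~$I$ vertex with parameters $(n,0)$ together with two new right type~$III$ vertices of parameter $r=n+1$, leaving every other vertex (and hence every other weight) intact. Since the bundle itself is unchanged, the Euler number is unchanged, and the local formula forces the sum of weights at the new vertices to vanish. Reading off the sign from the portraits then gives $\mathcal{F}_{ff}(n,0) = 2\,\mathcal{F}_{fs}(n+1,R)$, which is the claimed identity.

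Concretely, I take a quasisection $\mathcal{Q}$ in which some regular sheet $s$ lies just above an arc of a fold $L$ (of some other part of $\mathcal{Q}$) in the fiber direction, with exactly $n$ additional simple sheets stacked between $s$ and $L$. The local surgery is to press a small downward dimple into $s$ that passes through all $n$ intermediate sheets and dips just past $L$ in $E$, acquiring a small circular fold of its own. A direct inspection of portraits of the new singular vertices shows:
\begin{itemize}
\item a single new type~$I$ vertex, where the dimple's little fold meets $L$ in projection; its portrait contains $n$ simple circles on the stacked side of the two folds and $0$ on the other, so its parameters are $(n,0)$;
\item two new type~$III$ vertices, at the two points where the dimple's fold transversally pierces the sheet carrying $L$ in $E$; each portrait has $r=n+1$ simple circles and carries the same right orientation label.
\end{itemize}
All other essential vertices of $\mathcal{Q}$ are unaffected because the surgery is supported in an arbitrarily small neighborhood of the chosen arc.

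The main obstacle will be the orientation bookkeeping for the two new type~$III$ vertices. If they came out with opposite labels $R,L$, then by Lemma~\ref{LemmaAntisymm}(3) their contributions would cancel and we would be forced into the false conclusion $\mathcal{F}_{ff}(n,0)=0$. The coherence of the two orientations is the real content of the lemma: it holds because the dimple is displaced monotonically in a single fiber direction, so both of its transversal crossings with the underlying sheet inherit the same sign convention. This is exactly the mechanism that produces \emph{four} right type~$III$ vertices (rather than two~$R$ and two~$L$) in Example~\ref{ExCrossingPanA}; transporting that local model into the present surgery completes the argument.
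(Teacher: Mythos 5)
Your proposal fails exactly at the step you yourself flag as ``the real content'': the claimed inventory of new vertices cannot be produced by a surgery supported in a small disc $V$ of the base. Any new fold you create by dimpling $s$ has its singular locus a closed curve inside the modified part of $Q$, so its projection is a closed (possibly cuspidal) curve contained in $V$; such a curve meets the projection of $L$ transversally in an \emph{even} number of points, so you get $0$ or $2$ new type $I$ vertices, never the single one your count requires (and any cusps would be new type $II$ vertices you have not accounted for). Moreover, the two type $III$ crossings do not carry the same label: since $s$ is unchanged outside $V$, the dimple must descend past $L$ and come back up inside a trivialized neighborhood, so one crossing is descending and the other ascending, their portraits are line-symmetric, and by Lemma \ref{LemmaAntisymm}(3) they cancel. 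The same mirror cancellation hits the two type $I$ vertices of a circular dimple fold, so a surgery of this kind only reproduces the antisymmetry relations and gives no link between $\mathcal{F}_{ff}$ and $\mathcal{F}_{fs}$. The mechanism you borrow from Example \ref{ExCrossingPanA} is genuinely global: the four crossings there have one orientation because the two partial sections have nonzero relative winding over the annulus (the bundle has $\mathcal{E}=1$), and this cannot be transported into a trivialized ball. Finally, your arithmetic is internally inconsistent: from your own list (one type $I$ vertex with parameters $(n,0)$ and two right type $III$ vertices with $r=n+1$, weights summing to zero) you would conclude $\mathcal{F}_{fs}(n+1,R)=-\tfrac{1}{2}\mathcal{F}_{ff}(n,0)$, the negative of the statement; and with the true weights $\mathcal{W}_{ff}(n,0)+2\,\mathcal{W}_{fs}(n+1,R)=\tfrac{8}{(n+2)(n+4)}\neq 0$, so the configuration you describe cannot exist at all.

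The paper's proof avoids this by using a closed example rather than a local move: in Example \ref{ExCrossingPanB} one takes the trivial bundle over $S^2$, $n$ continuous sections, and two pancakes whose core discs \emph{cross each other} in $E$. Because the intersection arc of the two discs runs from one rim to the other, the fiber data flips between the two rim--rim crossings, so the two type $I$ vertices have equal (not line-symmetric) portraits and the four type $III$ vertices share one label; since $\mathcal{E}=0$, the local formula gives $2\mathcal{F}_{ff}(0,n)+4\mathcal{F}_{fs}(n+1,\cdot)=0$, and Lemma \ref{LemmaAntisymm} turns this into the stated identity. If you want an argument in your surgical style, you must build this essential crossing of a fold through the other component into the configuration; a dimple interacting with the rest of $\mathcal{Q}$ only through parallel sheets cannot detect it.
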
   
	
	\begin{proof}
	 The local formula for Example \ref{ExCrossingPanB} reads as $2\mathcal{F}_{ff}(0, n) + 4\mathcal{F}_{fs}(n + 1, L) = 0$.
	\end{proof}
 
	\begin{lemma} (The base of induction,  degree  $0$)\begin{enumerate}
			\item $\mathcal{F}_{ff}(2,0)= \mathcal{W}_{ff}(2,0)=1/6$.

			\item $\mathcal{F}_{fs}(1,R)=\mathcal{W}_{fs}(1,R)=1/4$.
			\item $\mathcal{F}_{p}(1,R)=\mathcal{W}_{p}(1,R)=1/4$.
		\end{enumerate}
	\end{lemma}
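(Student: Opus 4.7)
The plan is to exhibit, for each of the three asserted values, a specific quasisection from Section \ref{SecExamples} whose essential singular vertices are completely enumerated with known parameters, so that the local formula $\mathcal{F}$ collapses to a single linear equation in one unknown. Each of the three bundles involved has a topologically pinned-down Euler number, which is what makes the extraction possible.

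For (1), I would apply $\mathcal{F}$ to the four-pancake quasisection of Example \ref{Ex4pancakes}, which lives on the bundle $E \to S^2$ with $\mathcal{E}=2$. Its $12$ essential singular vertices are all of type $I$ with identical parameters $(n,k)=(2,0)$, so the local formula reduces to $2 = 12\,\mathcal{F}_{ff}(2,0)$, giving $\mathcal{F}_{ff}(2,0) = 1/6$. For (2), I would apply $\mathcal{F}$ to the two-crossing-pancake quasisection of Example \ref{ExCrossingPanA} on the bundle over $S^2$ with $\mathcal{E}=1$; here the $4$ essential vertices are all right type $III$ with $r=1$, so $1 = 4\,\mathcal{F}_{fs}(1,R)$ yields $\mathcal{F}_{fs}(1,R) = 1/4$.

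For (3), the plan is to apply $\mathcal{F}$ to Example \ref{CurledPancake} on the trivial bundle over $S^2$ (so $\mathcal{E}=0$), choosing the number $N$ of continuous sections so that both type $II$ vertices have parameter $r=1$. One then reads off the $R/L$ orientations of the two pleats and the parameters $(n,k)$ of the accompanying type $I$ vertex from the portrait of the curled pancake, substitutes the value of $\mathcal{F}_{ff}$ established in (1) (invoking the antisymmetry $\mathcal{F}_{ff}(n,k)=-\mathcal{F}_{ff}(k,n)$ of Lemma \ref{LemmaAntisymm} if needed), and solves the resulting single linear equation for $\mathcal{F}_p(1,R)$ to obtain $1/4$.

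The main obstacle will be part (3): the bookkeeping for the curled-pancake portrait must be done correctly, and if the two pleats happen to have opposite $R/L$ orientations they would cancel by Lemma \ref{LemmaAntisymm}, leaving $\mathcal{F}_p(1,R)$ undetermined from this example alone. In that contingency one appeals to an alternative construction — for instance the Klein-bottle degree-$1$ quasisection of Example \ref{ExOdd} with $\mathcal{E}=2$, whose non-orientability forces an isolated pleat whose weight survives in the local formula — to produce the missing linear equation. Parts (1) and (2) are essentially tautological once the examples are taken as given; the genuine content is ensuring that at least one concrete quasisection isolates an uncanceled pleat of parameter $r=1$.
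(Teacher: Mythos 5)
Your parts (1) and (2) are exactly the paper's proof: Example \ref{Ex4pancakes} gives $2=12\,\mathcal{F}_{ff}(2,0)$ and Example \ref{ExCrossingPanA} gives $1=4\,\mathcal{F}_{fs}(1,R)$, so nothing to add there. The gap is in part (3). Your plan hinges on choosing the number $N$ of sections in Example \ref{CurledPancake} ``so that both type $II$ vertices have parameter $r=1$'', and this is not achievable: the curled pancake's own sheets already contribute simple circles at each pleat (the paper itself, in the later lemma on $\mathcal{F}_{ff}(3,0)$, records the bare curled pancake's pleats as having $r=2$), and since a pancake projects onto a disk you must add at least one continuous section to make $\pi\circ q$ surjective; every added section only increases $r$. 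So every equation you can extract from this family involves $\mathcal{F}_p(r,\cdot)$ with $r\geq 2$ and simply does not contain the unknown $\mathcal{F}_p(1,R)$; it cannot be ``solved'' for it without also invoking the type $II$ reduction, Lemma \ref{LemmaInduction3} (which preserves the parity of $r$, so you additionally need a configuration whose pleats have odd $r$), together with the antisymmetry of $\mathcal{F}_p$ from Lemma \ref{LemmaAntisymm}. That chain is exactly what your write-up omits; with it the curled-pancake route can be repaired (e.g.\ curled pancake plus one suitably placed section gives an equation in $\mathcal{F}_p(3,\cdot)$ and $\mathcal{F}_{ff}(2,0)$, and Lemma \ref{LemmaInduction3} then steps down to $\mathcal{F}_p(1,R)$), but as stated the ``single linear equation in one unknown'' does not exist.

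Your fallback is also not viable as described. In Example \ref{Exodd} the quasisection has degree $1$, and at a pleat the local sheet numbers are $r+1$ and $r+3$, so $r\equiv \deg+1 \pmod 2$: all pleats of that quasisection have even $r$, hence its local formula again contains no $\mathcal{F}_p(1,R)$ term; moreover it involves $\mathcal{F}_p(0,R)$ and $\mathcal{F}_{ff}(1,0)$, which at this stage are still unknown (the paper determines them only in the degree-$1$ base lemma, using this very example). Also, your worry that the two pleats of the curled pancake might cancel is unfounded (they have the same handedness), but that is not the issue. The paper's own proof of (3) avoids all of this by taking the Boy surface plus two continuous sections, whose essential vertices are three pleats, three type $III$ vertices and three type $I$ vertices, yielding $0=3\mathcal{F}_{p}(3,R)+3\mathcal{F}_{fs}(3,R)-3\mathcal{F}_{ff}(2,0)$, and then combining this with Lemma \ref{LemmaInduction3}, the type $III$ reduction lemma, and parts (1)--(2); your proposal needs an analogous reduction step spelled out before it closes.
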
            
	\begin{proof}
		(1) follows from Example \ref{Ex4pancakes}.
		
		(2) follows from Example \ref{ExCrossingPanA}.
		
		(3) Take the Boy surface  \cite{Boy}, place it in the total space of the trivial bundle $S^2\times S^1 \rightarrow S^2$, and add two continuous sections, disjoint from the Boy surface.
		We get a quasisection with degree $2$.   The local formula $\mathcal{F}$  reads as
		
$$0=3\mathcal{F}_{p}(3,R)+3\mathcal{F}_{fs}(3,R)-3\mathcal{F}_{ff}(2,0).$$ Applying Lemma \ref{LemmaInduction3}, (1) and  (2),  one gets
	
$$\mathcal{F}_p(1,R)=1/4=\mathcal{W}_p(1,R).$$

	\end{proof}

\begin{lemma} $$\mathcal{F}_{ff}(3, 0)=\frac{3}{7}\mathcal{F}_{ff}(1, 0), \ \ \mathcal{F}_{ff}(2, 1)=\frac{1}{7}\mathcal{F}_{ff}(1, 0). $$

	\end{lemma}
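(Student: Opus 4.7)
The plan is to combine the $m=1$ instance of the three-pancake induction (Lemma \ref{LemmaInduction1}) with a generalization of the Boy surface construction from the preceding base-of-induction lemma. Together these will produce two linearly independent equations in the unknowns $\mathcal{F}_{ff}(3,0)$ and $\mathcal{F}_{ff}(2,1)$, whose simultaneous solution is precisely the claimed formula.

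First I would extract one relation from the pancake induction. Applying the six-term identity in the proof of Lemma \ref{LemmaInduction1} at $m = 1$ with $(a,b,c) = (1,0,0)$, and then using the antisymmetry Lemma \ref{LemmaAntisymm} to rewrite $\mathcal{F}_{ff}(0,1) = -\mathcal{F}_{ff}(1,0)$ and $\mathcal{F}_{ff}(1,2) = -\mathcal{F}_{ff}(2,1)$, the identity collapses to
\[
2\mathcal{F}_{ff}(3,0) + \mathcal{F}_{ff}(2,1) = \mathcal{F}_{ff}(1,0). \quad (\star)
\]

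Second, I would obtain an independent relation by repeating the Boy surface example in the base-of-induction lemma with $N$ continuous sections instead of two. Place the Boy surface in the trivial bundle $S^{2}\times S^{1}\to S^{2}$ together with $N$ mutually disjoint continuous sections, all disjoint from the Boy surface and concentrated in a common fiber region so as not to cross any of its sheets. The three-fold symmetry is preserved, and the only essential singular vertices are still three pleats, three type III, and three type I vertices, now with parameters $r = N+1$ at each pleat and each type III vertex and $(n,k) = (0,N)$ at each type I vertex. The local formula therefore reads
\[
3\mathcal{F}_{p}(N+1,R) + 3\mathcal{F}_{fs}(N+1,R) - 3\mathcal{F}_{ff}(N,0) = 0,
\]
and substituting the already-proved identity $\mathcal{F}_{fs}(N+1,R) = \tfrac{1}{2}\mathcal{F}_{ff}(N,0)$ (Lemma reducing Type III to Type I) simplifies this to $\mathcal{F}_{p}(N+1,R) = \tfrac{1}{2}\mathcal{F}_{ff}(N,0)$ for every $N\geq 1$. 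Specializing to $N=1$ and $N=3$ gives $\mathcal{F}_{p}(2,R) = \tfrac{1}{2}\mathcal{F}_{ff}(1,0)$ and $\mathcal{F}_{p}(4,R) = \tfrac{1}{2}\mathcal{F}_{ff}(3,0)$. Substituting these two values into the Type II induction (Lemma \ref{LemmaInduction3}) at $n = 2$,
\[
\mathcal{F}_{p}(4,R) = \mathcal{F}_{p}(2,R) - \mathcal{F}_{ff}(3,0) + \mathcal{F}_{ff}(2,1),
\]
and clearing the factor of $\tfrac{1}{2}$, yields the second linear relation
\[
3\mathcal{F}_{ff}(3,0) - 2\mathcal{F}_{ff}(2,1) = \mathcal{F}_{ff}(1,0). \quad (\star\star)
\]

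Finally I would solve the $2\times 2$ system. Its determinant equals $2\cdot(-2) - 1\cdot 3 = -7 \neq 0$, so the solution is unique. Concretely, $(\star\star) + 2(\star)$ gives $7\mathcal{F}_{ff}(3,0) = 3\mathcal{F}_{ff}(1,0)$, whence $\mathcal{F}_{ff}(3,0) = \tfrac{3}{7}\mathcal{F}_{ff}(1,0)$, and $(\star)$ then forces $\mathcal{F}_{ff}(2,1) = \tfrac{1}{7}\mathcal{F}_{ff}(1,0)$. The most delicate step will be the vertex enumeration in the Boy surface with $N$ sections: one must verify that placing $N$ parallel sections in a common fiber region disjoint from the Boy surface only appends $N$ simple circles to each previously-existing portrait, and does not produce new essential vertices or change the left/right orientation at any existing pleat or type III vertex. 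Granted this geometric check, the rest of the proof is pure linear algebra.
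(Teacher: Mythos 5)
Your proposal is correct, and it reaches the claimed ratios by a partially different route from the paper. Your relation $(\star)$, $2\mathcal{F}_{ff}(3,0)+\mathcal{F}_{ff}(2,1)=\mathcal{F}_{ff}(1,0)$, is exactly the paper's first equation, which the paper obtains from the same geometric configuration (three pancakes plus one continuous section, i.e.\ the $m=1$ case of the triple-pancake construction; $m=1$ is legitimate even though Lemma \ref{LemmaInduction1} is stated for $m\geq 2$, since the single section already covers the base). One caveat: the six-term identity as printed in the proof of Lemma \ref{LemmaInduction1} lists the three inner vertices with their arguments in the opposite order (as printed it is not satisfied by $\mathcal{W}_{ff}$), so your ``collapse'', which produces a term $\mathcal{F}_{ff}(1,2)$, tacitly uses the corrected orientation bookkeeping; since your output coincides with the paper's own equation and with $\mathcal{W}_{ff}$, this is harmless. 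Where you genuinely diverge is the second relation: the paper derives it from a curled pancake with a small pancake stacked over one of its pleats, then applies antisymmetry and Lemma \ref{LemmaInduction3}, whereas you extend the Boy-surface-plus-sections family to $N$ parallel sections, get $3\mathcal{F}_{p}(N+1,R)+3\mathcal{F}_{fs}(N+1,R)-3\mathcal{F}_{ff}(N,0)=0$, combine this with the Type III reduction $\mathcal{F}_{fs}(N+1,R)=\tfrac12\mathcal{F}_{ff}(N,0)$ to obtain $\mathcal{F}_{p}(N+1,R)=\tfrac12\mathcal{F}_{ff}(N,0)$, and feed $N=1,3$ into Lemma \ref{LemmaInduction3} at $n=2$, yielding $3\mathcal{F}_{ff}(3,0)-2\mathcal{F}_{ff}(2,1)=\mathcal{F}_{ff}(1,0)$; the resulting $2\times 2$ system is nondegenerate and gives the stated values (all of your intermediate equations are indeed satisfied by $\mathcal{W}$). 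Your route buys reuse of constructions already set up (the Boy-surface examples and the Type III reduction) and avoids enumerating the vertices of the curled-pancake configuration, at the price of the geometric check you flag yourself: the $N$ sections, placed in a common fiber interval disjoint from the Boy surface, carry no folds and are disjoint from it in $E$, hence create no new essential vertices, and they append their simple circles on the same side of the two folds at each type I vertex, so the parameters become $(0,N)$ and $r=N+1$, extending exactly the $N=1,2$ cases the paper already asserts; this is at the same level of rigor as the paper's own examples, so your proof stands.
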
  

\begin{proof}  Let us prove that \begin{equation*}
 \begin{cases}
  2\mathcal{F}_{ff}(3, 0) + \mathcal{F}_{ff}(0, 1) + \mathcal{F}_{ff}(2, 1) = 0
   \\
  \mathcal{F}_{ff}(0, 1) + \mathcal{F}_{ff}(3, 0) + 4\mathcal{F}_{ff}(2, 1) = 0
   \\
   
 \end{cases}
\end{equation*}
   Take the trivial bundle  $S^2\times S^1\rightarrow S^2$ with one continuous section and three pancakes as in Fig. \ref{FigExtensionEdges}. The local formula gives the first equation.

   For the second equation let us take a trivial bundle over $S^2$ with one curled pancake and one small pancake placed over one of the pleats.The local formula gives: $\mathcal{F}_{p}(2, L) + \mathcal{F}_{p}(4, L) + \mathcal{F}_{ff}(0, 1) + \mathcal{F}_{ff}(2, 1) + \mathcal{F}_{ff}(2, 1) = 0$. Application of the antisymmetry property and Lemma \ref{LemmaInduction3} complete the proof.
\end{proof}

	\begin{lemma} (The base of induction, degree $1$)\begin{enumerate}
			\item $\mathcal{F}_{ff}(1,0)= \mathcal{W}_{ff}(1,0)=4/15$.
			\item $\mathcal{F}_{p}(0,R)=\mathcal{W}_{p}(0,R)=2/3$.
			\item $\mathcal{F}_{fs}(0,R)=\mathcal{W}_{fs}(0,R)=2/3$.
			
		\end{enumerate}
	\end{lemma}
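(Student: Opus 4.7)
The plan is to obtain three independent linear equations in the three genuinely new unknowns $\mathcal{F}_{ff}(1,0)$, $\mathcal{F}_p(0,R)$, and $\mathcal{F}_{fs}(0,R)$ by writing the local formula for three carefully chosen degree-$1$ quasisections. The existing relations (antisymmetry from Lemma \ref{LemmaAntisymm}, the type I induction of Lemma \ref{LemmaInduction1}, the type II recursion of Lemma \ref{LemmaInduction3}, and the type III-to-type I reduction) together with the degree-$0$ base values $\mathcal{F}_{ff}(2,0)=1/6$, $\mathcal{F}_{fs}(1,R)=1/4$, and $\mathcal{F}_p(1,R)=1/4$ are not enough to determine these three quantities: the type II recurrence relates $\mathcal{F}_p(r+2,R)$ to $\mathcal{F}_p(r,R)$ and so never reaches $r=0$ from $r=1$, the type III reduction expresses $\mathcal{F}_{fs}(n+1,R)$ in terms of $\mathcal{F}_{ff}(n,0)$ with $n\geq 1$ and so never reaches $\mathcal{F}_{fs}(0,R)$, and $\mathcal{F}_{ff}(1,0)$ is independent of the known $\mathcal{F}_{ff}(2,0)$ in the system of Lemma \ref{LemmaInduction1}. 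Hence genuinely new geometric input is needed, and since local surgeries (adding pancakes, wrinkles, etc.) preserve $\deg(\mathcal{Q})$, every such input must come from a quasisection of odd degree.

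The three degree-$1$ examples I would use are the following. First, Example \ref{Exodd} itself with $\mathcal{E}=2$: a careful inspection of the Klein-bottle surgery in Figure \ref{FigExOdd} identifies its essential singular vertices as two pleats with small parameter $r$ (arising from the closure of $\partial S$) together with a controlled collection of type I vertices introduced by the homotopy; reading off the local formula yields a first linear equation. Second, a modification of Example \ref{Exodd} in which one adds a small pancake whose projection lies inside $D$ and crosses the folded-over sheet transversally exactly once; this creates a right type III vertex with $r=0$, plus a pair of type I vertices whose parameters are small and whose weights are already expressible through $\mathcal{F}_{ff}(1,0)$ and $\mathcal{F}_{ff}(2,0)$. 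Third, attach a curled pancake as in Example \ref{CurledPancake} to a regular region of the quasisection from Example \ref{Exodd}; the two new pleats have opposite orientation, and using the antisymmetry from Lemma \ref{LemmaAntisymm} the resulting equation isolates $\mathcal{F}_p(0,R)$ against the type I and type III contributions already handled.

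The principal obstacle is the singularity bookkeeping: for each new example one must draw the portrait of every newly introduced singular vertex, distinguish left from right pleats and type III vertices by inspecting the orientation of the fiber around the vertex, and count the number $r$ of simple circles in each portrait. A single sign or count mistake would propagate into the final linear system and change the answer. Once the three equations are correctly assembled, solving them against the known degree-$0$ base values and the recurrences is routine linear algebra and produces the stated equalities $\mathcal{F}_{ff}(1,0)=4/15$, $\mathcal{F}_p(0,R)=2/3$, and $\mathcal{F}_{fs}(0,R)=2/3$, matching the weights of Theorem \ref{ThmMain}.
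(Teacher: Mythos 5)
Your strategic framing is sound and matches the paper's: the recurrences of Lemmas \ref{LemmaInduction1} and \ref{LemmaInduction3} and the type $III$ reduction never reach $\mathcal{F}_{ff}(1,0)$, $\mathcal{F}_p(0,R)$, $\mathcal{F}_{fs}(0,R)$, and since the local surgeries preserve $\deg(\mathcal{Q})$, fresh odd-degree examples are required. But beyond that your text is a plan rather than a proof, and two of the three planned equations rest on geometric claims that are wrong or unverified. (a) A small pancake added inside $D$ cannot produce a single type $III$ vertex: its fold is a closed curve, and when it meets a regular sheet it generically does so in a pair of type $III$ vertices of opposite handedness with equal $r$ --- this is precisely the mechanism in the proof of Lemma \ref{LemmaAntisymm}(3) --- so the two contributions cancel and no equation involving $\mathcal{F}_{fs}(0,R)$ results; a net type $III$ contribution needs a global construction such as Example \ref{ExCrossingPanA} or \ref{ExCrossingPanB}. (b) The two pleats of a curled pancake have the \emph{same} handedness (the paper's own curled-pancake computation reads $\mathcal{F}_p(2,L)+\mathcal{F}_p(4,L)+\dots=0$); if they were opposite with equal $r$, antisymmetry would make them cancel, so your third equation could not ``isolate'' $\mathcal{F}_p(0,R)$. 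Moreover a curled pancake placed near a regular sheet gives pleats with $r\geq 1$, and, being a degree-preserving local addition, it yields a relation equally available over degree-$0$ quasisections; the same is true of your second modification. Hence, by your own logic, your second and third equations differ from the first only by information already implied by the general lemmas, and the independence of your $3\times 3$ system --- the whole point of the construction --- is exactly what is in doubt and is never checked. (c) Even for the first equation, the inventory of essential vertices of Example \ref{Exodd} (``two pleats with small $r$ plus a controlled collection of type $I$ vertices'') is asserted, not computed, and no coefficients are ever produced.

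The paper closes the gap differently: besides Example \ref{Exodd} (which supplies the inhomogeneous equation $2=\lambda\mathcal{F}_p(0,R)+\mu\mathcal{F}_{ff}(1,0)$), the decisive new ingredient is the Boy surface placed in the trivial bundle $S^2\times S^1\rightarrow S^2$ together with one continuous section. This degree-$1$ quasisection has three right pleats, three type $III$ vertices and three type $I$ vertices, giving the homogeneous relation $0=3\mathcal{F}_p(2,R)+3\mathcal{F}_{fs}(2,R)-3\mathcal{F}_{ff}(1,0)$, which after the reductions of Lemma \ref{LemmaInduction3}, the type $III$ reduction and antisymmetry becomes a relation between $\mathcal{F}_p(0,R)$ and $\mathcal{F}_{ff}(1,0)$ alone; the two equations then determine these values, and the $r=2$ type $III$ and pleat data of the Boy surface are what let one descend to the $r=0$ quantities. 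You would need this example (or an equally explicit odd-degree quasisection with fully computed portraits) to assemble a system you can actually solve; as written, the proposal does not establish the three stated values, and in particular offers no valid route to $\mathcal{F}_{fs}(0,R)$.
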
            
	
	\begin{proof}
		Let us take the Boy surface, place it in the total space of the trivial bundle $S^2\times S^1 \rightarrow S^2$, and add a continuous section.
		The union of the Boy surface and the continuous section is a quasisection with degree $1$. Application of the local formula $\mathcal{F}$
and Lemma \ref{LemmaInduction1}  give: $$0=3\mathcal{F}_{p}(2,R)+3\mathcal{F}_{fs}(2,R)-3\mathcal{F}_{ff}(1,0)=$$
	$$=3(\mathcal{F}_p(0,R)-\mathcal{F}_{ff}(1,0)+\mathcal{F}_{ff}(0,1)) +3/2\mathcal{F}_{ff}(1,0)-	3\mathcal{F}_{ff}(1,0)=$$
$$= 3\mathcal{F}_p(0,R)-3/2\mathcal{F}_{ff}(1,0).$$
		We get a homogeneous linear equation which includes unknowns $\mathcal{F}_{p}(0,R)$ and $\mathcal{F}_{ff}(1,0)$.
		
		Next, we take the quasisection from Example \ref{Exodd}, where $deg=1$, and  $\mathcal{E}=2$. Applying the local formula $\mathcal{F}$ together with the above lemmata, we get
		$$2=\lambda\mathcal{F}_{p}(0,R)+\mu\mathcal{F}_{ff}(1,0)$$  for some coefficients $\lambda$ and $\mu$.  Thus we obtain a system of linear equations which has a unique solution. 
		
	\end{proof}

Theorem \ref{ThmUniqueness} is proven.

\begin{remark}\label{remark}
  Theorem \ref{ThmMain} can be proven based on the approach of the section. Firstly,  a local formula exists, this follows from
the general averaging principle. Secondly, the values of the weights are retrieved uniquely, this is what has been done in the section.
\end{remark}

	\section{Local formula for bordered quasisections}\label{SecBordered}

\begin{definition}
		A bordered quasisection of $ E \xrightarrow[\text{}]{\pi} B$ is a  smooth surface  $Q$  (possibly with boundary) and a smooth map
		$q:Q\rightarrow E$
		such that $\pi\circ q (Q)=B$. 
	\end{definition}

	With a bordered quasisection there arise new types of essential vertices  (in addition to the ''old'' types $I$, $II$ and $III$):

		\begin{enumerate}
			
			\item  \textbf{Type $I_{bb}$}  is an intersection point of (projections of) two boundary curves. Its portrait is depicted   in Fig. \ref{fig:2nonsymm}, left, up to adding/removing  some of simple circles.

			\item  \textbf{Type $I_{bf}$, right} is an intersection point of (projections of) a boundary curve  and a fold is  as  in Fig. \ref{fig:2nonsymm}, center, up to adding/removing  some of simple circles.

			The portrait of \textbf{Type $I_{bf}$, left}  is a line image of a  portrait of \textbf{Type $I_{bf}$, right.}

			\item \textbf{ Type $III_{bs}$, right} (respectively, \textbf{left}) is the projection of an intersection of a boundary curve and  a regular sheet. Its portrait  is as in Fig.  \ref{fig:2nonsymm}, right. The portrait of \textbf{ Type $III_{bs}$, left} is line symmetric to the figure).

			\item  \textbf{ Type $IV_{tf},$ right(left)}  arises as  the projection of a point where a fold terminates on the border line, see Fig. \ref{FigTermFold}. 
Their portraits are in Fig. \ref{fig:3nonsymm}
		\end{enumerate}

	\begin{figure}[h]
		\includegraphics[width=0.8\linewidth]{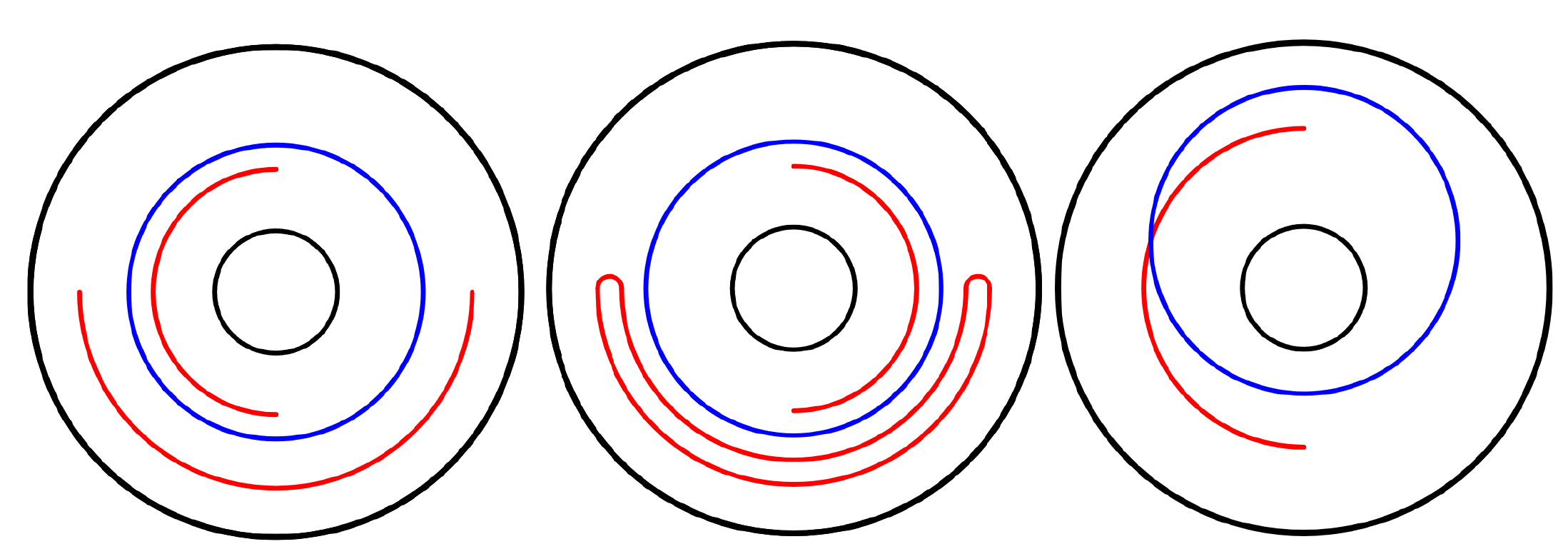}
		\caption {Essential vertices for bordered quasisection, types $I_{bb}$, $I_{bf}$  right, and $III_{bs}$  right. }
		\label{fig:2nonsymm}
	\end{figure}

	\begin{figure}[h]
		\includegraphics[width=0.3\linewidth]{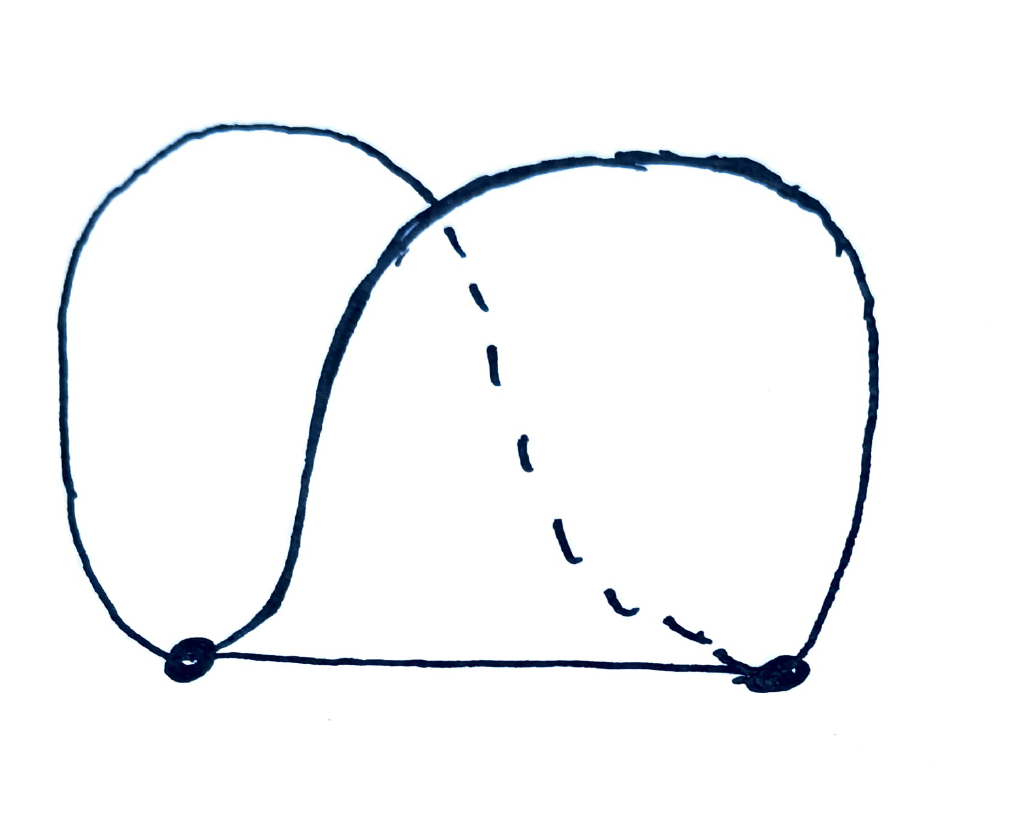}
		\caption {A curled disk  gives two singularities of  Type $IV_{tf},$ left, and one singularity of type Type $I_{bb}$. }
		\label{FigTermFold}
	\end{figure}

	\begin{figure}[h]
		\includegraphics[width=1\linewidth]{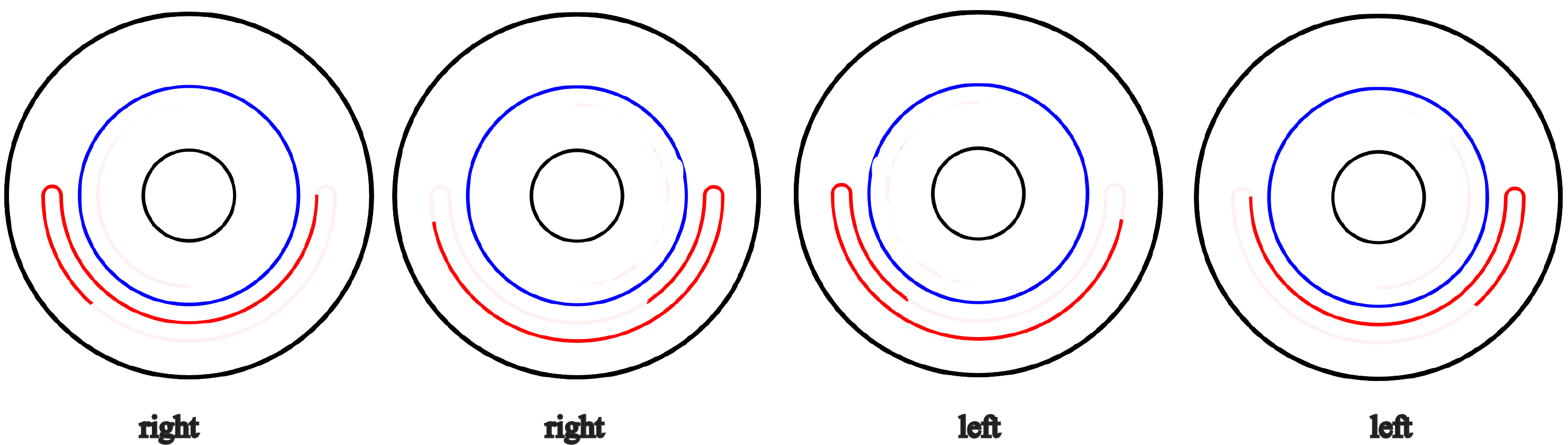}
		\caption {Essential vertices for bordered quasisection, types $IV_{tf}$, right and left. }
		\label{fig:3nonsymm}
	\end{figure}

	Let $x$ be an essential vertex of type $I_{bb}$ (of type of type $I_{bf}$, respectfully). We order the boundary curves (the  pair "boundary curve, fold curve", respectfully) in the same way as we ordered folds for essential vertices of
type $I$. 

	Namely, imagine a point $y$ that goes around $x$  in the ccw direction, starting from a place with 
	 the minimal number of points in $\pi^{-1}(y)\cap\mathcal{ Q}$, and
 apply the rule ''the point $y$ meets the first  curve   first''. 
	
{ Set
		$n(x)$  be the number of simple circles of  $\pi^{-1}(y)\cap \mathcal{Q}$  lying between the first and the second  curves, 
		if one counts from the \textbf{first}  curve in the direction of the fiber.}
	
	Set also $k(x)$ be the number of simple circles  lying between the  curves, 
	if one counts from the \textbf{second}  curve in the direction of the fiber.  
	
\medskip

\medskip
	
	It is clear that:
	\begin{lemma}\begin{enumerate}
	               \item For essential vertices of types $I_{bb}$ and $I_{bf}$,\begin{enumerate}
			\item The portrait 
			defines (and  is defined by)
			two
			numbers, $n$ and $k$, and the type (right or left)  for  $I_{bf}$.
			\item For the  line symmetry image, $n$ and $k$ interchange.
			\item  The number $n+k$ cannot be zero.
		\end{enumerate}
	              
	               \item 	For essential vertices of type $III_{bs}$, the  portrait  defines
		(and  is defined by)  its type (right or left) and the  number $r$ of simple circles ($r$ might be zero).     
	             \end{enumerate}
\end{lemma}

\medskip

Now we assign\textit{ weights }to new essential singular vertices.  

	\begin{definition}\begin{enumerate}
			\item  For an essential vertex  $x$ of type $I_{bb}$, set $$\mathcal{W}_{bb}(x)=\mathcal{W}_{bb}(n,k)=\frac{(n-k)}{(n+k)(n+k+1)(n+k+2)}.$$
			
			\item For a right essential vertex $x$ of type $I_{bf}$  set $$\mathcal{W}_{bf}(x)=\mathcal{W}_{bf}(n,k, R)=\frac{(n-k)}{(n+k)(n+k+3)}\Big(\frac{1}{n+k+1}+\frac{1}{n+k+2}\Big). $$
			
			For a left essential vertex $x$ of type $I_{bf}$ set $$\mathcal{W}_{bf}(x)=\mathcal{W}_{bf}(n,k, L)=\frac{(k-n)}{(n+k)(n+k+3)}\Big(\frac{1}{n+k+1}+\frac{1}{n+k+2}\Big). $$
			
			\item For a right essential vertex $x$ of type $III_{bs}$, set $$\mathcal{W}_{bs}(x)=\mathcal{W}_{bs}(r, R)= \frac{1}{(r+1)(r+2)}.$$
			
			For a left essential vertex $x$ of type $III_{bs}$, set $$\mathcal{W}_{bs}(x)=\mathcal{W}_{bs}(r, L)= \frac{-1}{(r+1)(r+2)}.$$

\item For a right essential vertex $x$ of type $IV_{tf}$, set $$\mathcal{W}_{tf}(x)=\mathcal{W}_{tf}(r, R)= \frac{1}{2(r+1)(r+2)}.$$
			
			For a left essential vertex $x$ of type  $IV_{tf}$, set $$\mathcal{W}_{tf}(x)=\mathcal{W}_{tf}(r, L)= \frac{-1}{2(r+1)(r+2)}.$$
		\end{enumerate}
		
	\end{definition}

	\begin{theorem}\label{ThmMain2}  
		The Euler number of a circle bundle with a generic (possibly bordered) quasisection $\mathcal{Q}$ equals the sum of weights of essential singular vertices:
		
		$$\mathcal{E}= \sum_I \mathcal{W}_{ff}(x_i)+ \sum_{II} \mathcal{W}_{p}(x_i)+\sum_{III} \mathcal{W}_{fs}(x_i)+$$
$$\sum_{I_{bb}} \mathcal{W}_{bb}(x_i)+ \sum_{I_{bf}} \mathcal{W}_{bf}(x_i)+\sum_{III_{bs}} \mathcal{W}_{bs}(x_i)+\sum_{IV_{tf}} \mathcal{W}_{tf}(x_i).$$
		
		This formula is unique, that is, no other assignment of weights gives a valid expression for the Euler number.
	\end{theorem}

\begin{proof} (a)  We apply direct computations of the averaged index of sections, exactly as was done in Theorem \ref{ThmMain}. 

(b) Uniqueness follows analogously to Section \ref{SecUni}. That is,
\begin{enumerate}
  \item Uniqueness of $W_{bb}$ is proven analogously to uniqueness of $W_{ff}$.
  \item Next, uniqueness of $W_{tf}$  comes from uniqueness of $W_{bb}$ and Fig. \ref{FigTermFold}.
  \item Uniqueness of $W_{bs}$  comes from Fig. \ref{FigCrossingDisks}, left.
\item Finally, $W_{bf}$ comes from the following surgery. Whenever we have a border line and a fold whose projections intersect, let us cut off a neighborhood of a point $y$ lying on the fold which projects to the projection of the border line, see Fig. \ref{FigCrossingDisks}, right. We get a new quasisection with eliminated  $I_{bf}$ singularity. So the weight $W_{bf}$ expresses in terms of already known weights.
\end{enumerate}
\end{proof}

	\begin{figure}[h]
		\includegraphics[width=0.6\linewidth]{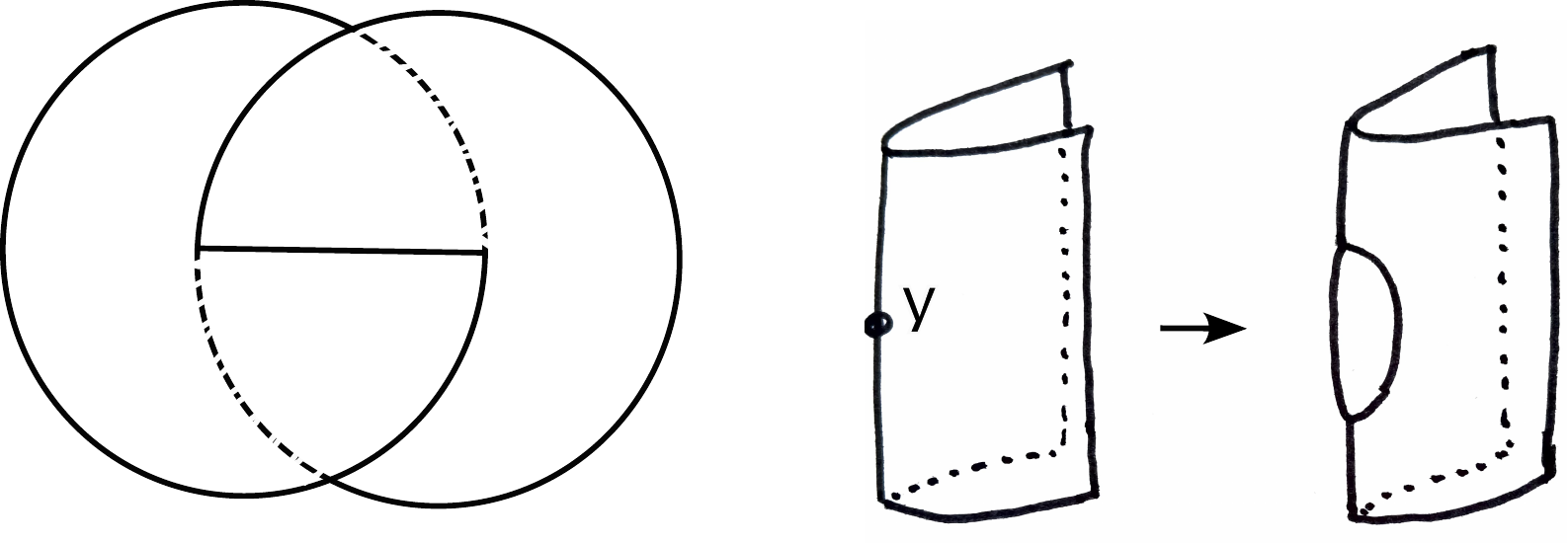}
		\caption {Left: adding two crossing disks amounts to adding two singular vertices of type $III_{bs}$ and two more of type $I_{bb}$.
Right: this surgery reduces $I_{bf}$ singularity to other types.}
		\label{FigCrossingDisks}
	\end{figure}
	
	\bigskip

\textbf{Acknowledgement. } 
  This work is supported by the Russian Science Foundation (project 25-11-00058)

\end{document}